\title{Poisson hyperplane processes\\ and approximation of convex bodies}
\author{Daniel Hug and Rolf Schneider}
\date{}
\newcommand{\Sd}{{\mathbb S}^{d-1}}
\newcommand{\R}{{\mathbb R}}
\newcommand{\K}{{\mathcal K}}
\newcommand{\bP}{{\mathbb P}}
\newcommand{\N}{{\mathbb N}}
\newcommand{\Ha}{\mathcal{H}}
\newcommand{\D}{{\rm d}}
\newcommand{\bE}{{\mathbb E}\,}
  \renewcommand{\exp}{{\rm exp}\,}
  \newcommand{\fed}{\,\rule{.1mm}{.20cm}\rule{.20cm}{.1mm}\,}
\newtheorem{theorem}{Theorem}
\newtheorem{lemma}{Lemma}
\begin{document}
\maketitle

\begin{abstract}
A natural model for the approximation of a convex body $K$ in $\R^d$ by random polytopes is obtained as follows. Take a stationary Poisson hyperplane process in the space, and consider the random polytope $Z_K$ defined as the intersection of all closed halfspaces containing $K$ that are bounded by hyperplanes of the process not intersecting $K$.
If $f$ is a functional on convex bodies, then for increasing intensities of the process, the expectation of the difference $f(Z_K)-f(K)$ may or may not converge to zero. If it does, then the order of convergence and possible limit relations are of interest. We study these questions if $f$ is either the hitting functional or the mean width.\\[1mm]
{\em Keywords:} Poisson hyperplane process; convex body; hitting functional; mean width; approximation\\[1mm]
2010 Mathematics Subject Classification: Primary 60D05, Secondary 52A27
\end{abstract}

\section{Introduction}\label{sec1}

The approximation of convex bodies by random polytopes is a major theme in Stochastic Geometry. Each of the survey articles \cite{Bar07, Bar08}, \cite{Buc85}, \cite{Hug13}, \cite{Rei10}, \cite{Sch88, Sch18}, \cite{WW93}, and also Section 8.2 of the book \cite{SW08}, provides information about this topic. In this context, an often used model for a random polytope is the convex hull of $n$ independent random points with uniform distribution in a given convex body $K$ in $\R^d$. As $n$ tends to infinity, one is interested in the behavior of some functional evaluated at this convex hull, in comparison to the same functional evaluated at $K$. Replacing the set of $n$ independent random points by a Poisson process of intensity $n$ is  often an advantage, due to the strong independence properties of such processes. The dual generation of polytopes, by intersecting finitely many closed halfspaces, leads to new aspects. Combining this with Poisson processes, we are led to the following model. We consider a stationary Poisson hyperplane process $\widehat X$ in $\R^d$ with directional distribution $\varphi$ (an even finite Borel measure on the unit sphere) and intensity $n\in\N$. (This choice of intensity is inessential and is only made to stress the analogy to models using $n$ independent, identically distributed random points.) A convex body $K\subset\R^d$ is given, and we define the $K$-cell $Z_K^{(n)}$ of $\widehat X$ as the intersection of all closed halfspaces containing $K$ that are bounded by hyperplanes of $\widehat X$ not intersecting $K$. A special feature of this model consists in the fact that the $K$-cell $Z_K^{(n)}$ need not converge a.s. to $K$ as $n\to\infty$; whether it does, will depend on the directional distribution of the hyperplane process.

A first result on the asymptotic behavior of the $K$-cell was proved by Kaltenbach \cite{Kal90}, who considered the volume $V$. Under the assumption that the directional distribution of the hyperplane process $\widehat X$ has a positive, continuous density with respect to spherical Lebesgue measure, he proved that
\begin{equation}\label{1.1}
n^{-\frac{2}{d+1}}\ll \bE V(Z_K^{(n)})-V(K)\ll n^{-\frac{1}{d}}.
\end{equation}
Here $\bE$ denotes the expectation, and $f\ll g$ means that there exists a constant $c$, independent of $n$, such that $f(n)\le cg(n)$ for all sufficiently large $n$. The constant $c$ may depend on $d,\varphi,K$ (where $d$ is determined by $K$, if $K$ has interior points).

The estimates (\ref{1.1}) immediately provoke the question whether they can be extended from the volume $V=V_d$ to the general intrinsic volume $V_i$. For the first intrinsic volume $V_1$, which is proportional to the mean width $W$, we give here the following answer. By $\sigma$ we denote the normalized spherical Lebesgue measure on the unit sphere $\Sd$ of $\R^d$.

\begin{theorem}\label{T1.1}
Suppose that the stationary Poisson hyperplane process $\widehat X$ has intensity $n\in\N$ and spherical directional distribution $\varphi$. Let $K\subset\R^d$ be a convex body with interior points. Then the mean width $W$ of the $K$-cell $Z_K^{(n)}$ satisfies
\begin{equation}\label{1.2}
n^{-1}\log^{d-1}n  \ll \bE W(Z_K^{(n)}) -W(K) \ll n^{-\frac{2}{d+1}}
\end{equation}
under the following assumptions:

The left inequality holds if the directional distribution satisfies
\begin{equation}\label{1.3}
\varphi\le a_0\sigma
\end{equation}
with some constant $a_0$. The right inequality holds if
\begin{equation}\label{1.4}
\varphi\ge a_1\sigma
\end{equation}
with some constant $a_1>0$.
\end{theorem}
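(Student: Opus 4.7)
\textbf{Reduction and basic identity.} Since $W$ is linear in the support function, $W(L) - W(K) = c_d\int_{\Sd}[h(L,u) - h(K,u)]\,\sigma(\D u)$ for $L \supset K$ with a dimensional constant $c_d > 0$, so by Fubini,
\[
\bE W(Z_K^{(n)}) - W(K) \;=\; c_d \int_{\Sd}\bE[h(Z_K^{(n)}, u) - h(K, u)]\,\sigma(\D u),
\]
and $\bE[h(Z_K^{(n)}, u) - h(K, u)] = \int_0^\infty \bP[h(Z_K^{(n)}, u) > h(K,u) + s]\,\D s$. The central probabilistic tool is the Poisson identity $\bP[B \subseteq Z_K^{(n)}] = \exp(-n\, m_\varphi(K, B))$ valid for any Borel set $B$ disjoint from $K$, where $m_\varphi(K, B) = 2\int_{\Sd}[h(\conv(K \cup B), v) - h(K, v)]\,\varphi(\D v)$ is the $\varphi$-measure of hyperplanes separating $K$ from $B$. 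The singleton case $B = \{y\}$ specializes to $\bP[y \in Z_K^{(n)}] = \exp(-n\Psi(K, y))$ with $\Psi(K, y) = \int_{\Sd}[\langle y, v\rangle - h(K, v)]_+\,\varphi(\D v)$.

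\textbf{Upper bound} (under $\varphi \ge a_1 \sigma$). For fixed $u \in \Sd$ and $s > 0$, I argue $\bP[h(Z_K^{(n)}, u) > h(K, u) + s]$ is small by a cap-covering: the event $h(Z_K^{(n)}, u) \le h(K, u) + s$ is implied by the existence of $d$ hyperplanes of $\widehat X$ with normals $v_1, \dots, v_d$ lying in a spherical cap $C(u, \alpha)$, each at distance at most $s$ from $K$, and such that $u \in \inn \pos\{v_1, \dots, v_d\}$. Such a configuration bounds the support function of the intersection of their $K$-containing halfspaces in direction $u$ by $h(K, u) + s + O(\alpha^2 \cdot \diam K)$ (via Taylor expansion of $h(K, \cdot)$ at $u$, assuming only that $K$ has interior points). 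Since $\varphi \ge a_1\sigma$, the expected number of hyperplanes in the cap at distance at most $s$ is $\gtrsim n s \alpha^{d-1}$, and a standard large-deviations argument for the Poisson point process gives exponentially small failure probability. Balancing $\alpha \asymp \sqrt{s}$ (so the Taylor correction is absorbed) with $n s \alpha^{d-1} \asymp 1$ yields $s \asymp n^{-2/(d+1)}$; the rate survives integration over $u \in \Sd$.

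\textbf{Lower bound} (under $\varphi \le a_0 \sigma$). The elementary estimate
$\bP[h(Z_K^{(n)}, u) > h(K, u) + s] \ge \exp(-n\Psi(K, p_K(u)+su))$
combined with $\Psi(K, p_K(u)+su) \le C s$ (from $\varphi \le a_0\sigma$) integrates to only $\bE W - W \gtrsim 1/n$, short of the target by $\log^{d-1} n$. To recover the logarithmic factor, one considers cap-containment events $\bP[B_{C(u_0,\alpha), s} \subseteq Z_K^{(n)}] \ge \exp(-n\, m_\varphi(K, B_{C(u_0,\alpha), s}))$ at multiple angular scales $\alpha$, where $B_{C,s} := \{p_K(u) + su : u \in C\}$, exploiting Poisson independence on disjoint regions of hyperplane space. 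Reducing to the polytopal case by approximation: near a vertex $p$ of $K$, the spherical normal polytope $N_p \cap \Sd$ has a flag structure, and a multi-parameter integration over $(d-1)$ angular dimensions with integrands of the form $\prod_{i=1}^{d-1} t_i \cdot \exp(-n\prod_i t_i)$ yields iterated logarithms: $\int\cdots\int \prod t_i\, e^{-n\prod t_i}\,\D t_1 \cdots \D t_{d-1} \asymp n^{-1}\log^{d-1} n$, in analogy with the B\'ar\'any--Buchta analysis of convex-hull volume deficits in polytopes.

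\textbf{Main obstacle.} The central difficulty is the $\log^{d-1} n$ factor in the lower bound. The pointwise bound $\bP[h > h(K) + s] \ge e^{-nCs}$ is too crude; the logarithm must be extracted by identifying nearly-independent events indexed by angular scales and iterating the integration across the $d-1$ angular dimensions of a normal cone. Handling general convex $K$ (not just polytopes) adds a second layer of difficulty, requiring an approximation or localization to access flag-like structure at supporting points. The upper bound's chief technical point is the positive-basis / spanning condition: ensuring that the random configuration of hyperplanes in the small cap does trap the support function in direction $u$, which requires a geometric discretization argument to handle uniformly across $u \in \Sd$.
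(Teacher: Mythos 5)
Your proposal has the right orders and some correct ingredients (the Poisson void identity, the role of the hypotheses \eqref{1.3} and \eqref{1.4}), but both halves contain genuine gaps, and the more serious one is in the upper bound. Your key geometric claim there --- that if $d$ hyperplanes have normals $v_1,\dots,v_d$ in a cap $C(u,\alpha)$ with $u\in\inn\pos\{v_1,\dots,v_d\}$ and each lies at distance $\le s$ from $K$, then the intersection of their $K$-sided halfspaces has support value in direction $u$ at most $h(K,u)+s+O(\alpha^2\diam K)$ ``assuming only that $K$ has interior points'' --- is false. Writing $u=\sum\lambda_i v_i$, the error term is $\sum_i\lambda_i h(K,v_i)-h(K,u)$, and this is $O(\alpha^2)$ only where $h(K,\cdot)$ is twice differentiable; for $K=[-1,1]^2$ and $u=e_2$, $v_{1,2}=(\pm\sin\alpha,\cos\alpha)$, the two pushed-out supporting lines meet at height $1+\tan\alpha+s/\cos\alpha$, so the error is of order $\alpha$, not $\alpha^2$. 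With an $O(\alpha)$ error your balancing forces $\alpha\lesssim s$, hence $ns\alpha^{d-1}\asymp ns^{d}$ and $s\asymp n^{-1/d}$ --- the weaker exponent of \eqref{1.1}, not $n^{-2/(d+1)}$. Making a local argument work for arbitrary $K$ is essentially the content of the economic cap-covering theorem; the paper circumvents this entirely with a global entropy argument: Bronshtein's $\varepsilon$-net in $(\K,d_H)$ of cardinality $\exp(\varepsilon^{-(d-1)/2})$ (Lemma \ref{L2.2}), a union bound over shrunken net elements $M_j^{-\varepsilon}$ each of which gives a void probability $\exp[-2n(\Phi(M_j^{-\varepsilon})-\Phi(K))]$, and the balance $\varepsilon^{-(d-1)/2}\asymp n\varepsilon$, i.e.\ $\varepsilon=n^{-2/(d+1)}$ (Theorem \ref{T2.1}). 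This proves the deviation bound for the hitting functional $\Phi$ with no assumption on $K$ or $\varphi$; the hypothesis $\varphi\ge a_1\sigma$ is used only at the very end to convert $\Phi$-differences into $W$-differences.

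For the lower bound you correctly diagnose that the pointwise estimate $\bP[h(Z_K^{(n)},u)>h(K,u)+s]\ge e^{-Cns}$ loses the factor $\log^{d-1}n$, and you correctly locate where \eqref{1.3} enters, but your recovery plan is only a heuristic: ``reducing to the polytopal case by approximation near a vertex $p$ of $K$'' is not a valid reduction (a general $K$ has no vertices, and polytopal approximation does not obviously transfer a lower bound with uniform constants), and the multi-scale flag computation $\int\prod t_i\,e^{-n\prod t_i}\asymp n^{-1}\log^{d-1}n$ is asserted rather than connected to $Z_K^{(n)}$. The paper instead dualizes the B\'ar\'any--Larman floating-body machinery: it defines $\psi(H)=\min\{\Phi(K^y)-\Phi(K):y\in H\}$, shows $\bE W(Z_K^{(n)})-W(K)\ge e^{-2nt}\mu(\Ha_K^\psi(t))$ with $t=1/n$, and then uses the inversion $\eta(ru)=H(u,r^{-1})$ together with $\varphi\le a_0\sigma$ to prove $\mu(\Ha_K^\psi(a_2t))\ge a_3\lambda_d(K^\circ(t))$, so that the $\log^{d-1}$ factor comes from the known wet-part lower bound $\lambda_d(K^\circ(\varepsilon))\gg\varepsilon\log^{d-1}(1/\varepsilon)$ applied to the polar body. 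That polarity step, which converts your ``flag structure'' intuition into a citable theorem valid for all convex bodies, is the missing idea in your sketch.
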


In the case where $\widehat X$ is also isotropic, that is, satisfies $\varphi=\sigma$ (equivalently, the distribution of $\widehat X$ is invariant under rotations), this was proved in \cite{Sch19}. The proof given there for the lower estimate can be extended to the non-isotropic case. The upper estimate, however, requires a different approach.

Under the assumption that the directional distribution $\varphi$ has a positive, continuous density with respect to spherical Lebesgue measure, we prove in Section \ref{sec3} (Theorem \ref{T3.1}) a precise asymptotic relation for $\bE W(Z_K^{(n)}) -W(K)$, which shows that the order on the right side of (\ref{1.2}) is attained by sufficiently smooth convex bodies. Also the order on the left side of (\ref{1.2}) is sharp, as we show by another asymptotic relation, holding for simplicial polytopes.

We remark that an analogue of (\ref{1.1}) or (\ref{1.2}) for the intrinsic volume $V_i$, $i\not=1,d$, remains an open problem.

The right-hand estimate of (\ref{1.2}) will be derived from more general results about the hitting functional, which are of independent interest. This requires a few explanations. The space of convex bodies (nonempty, compact, convex sets) in $\R^d$ is denoted by $\K$. It is equipped with the Hausdorff metric $d_H$. The space of hyperplanes in $\R^d$ with its usual topology is denoted by $\Ha$. Hyperplanes and closed halfspaces are often written in the form
$$ H(u,\tau)=\{x\in\R^d:\langle x,u\rangle=\tau\},\qquad  H^-(u,\tau)=\{x\in\R^d:\langle x,u\rangle \le\tau\}$$
with $u\in\Sd$ (the unit sphere of $\R^d$) and $\tau\in \R$, where $\langle\cdot\,,\cdot\rangle$ denotes the scalar product of $\R^d$. For a convex body $M\in\K$ we denote by
$$ \Ha_M:= \{H\in\Ha: H\cap M\not=\emptyset\}$$
the set of hyperplanes hitting $M$.

That $\widehat X$ is a Poisson hyperplane process in $\R^d$ means that $\widehat X$ is a mapping from some probability space $(\Omega,{\bf A},\bP)$ into the measurable space of simple, locally finite counting measures on $\Ha$ with the following properties: $\{\widehat X(A)=0\}$ is measurable for all Borel sets $A\subset \Ha$,  the intensity measure $\widehat\Theta =\bE \widehat X$ is locally finite, and we have
$$ \bP(\widehat X(A)=k) =e^{-\widehat\Theta(A)}\frac{\widehat\Theta(A)^k}{k!}$$
for $k\in\N_0$ and each Borel set $A\subset\Ha$ with $\widehat\Theta(A)<\infty$. (For more information, also about independence properties, we refer to \cite{SW08}, Sections 3.1 and 3.2.) As usual, we identify a simple counting measure with its support; for example, we write $H\in\widehat X$ for $\widehat X(\{H\})=1$. The hyperplane process $\widehat X$ is stationary if its intensity measure (and hence its distribution) is invariant under translations. In that case, one has a unique representation
$$ \widehat\Theta(A) =\widehat\gamma \int_{\Sd}\int_{-\infty}^\infty {\mathbbm 1}_A(H(u,\tau))\,\D\tau\,\varphi(\D u)$$
for Borel sets $A\subset\Ha$ (see, e.g., \cite[(4.33)]{SW08}). Here $\widehat \gamma$ is the {\em intensity} of $\widehat X$, later assumed to be a number $n\in\N$, and the even probability measure $\varphi$ is the {\em directional distribution} of $\widehat X$.

If now $K\subset\R^d$ is a convex body, then the expected number of hyperplanes of the process hitting $K$ is given by
$$ \bE\widehat X(\Ha_K) = 2\widehat\gamma\Phi(K)$$
with
$$ \Phi(K) := \int_{\Sd} h(K,u)\,\varphi(\D u),$$
where $h(K,\cdot)$ denotes the support function of $K$. Therefore, the functional $\Phi$ on convex bodies is called the {\em hitting functional}. It should be compared to the mean width, which is given by
$$ W(K) = 2\int_{\Sd} h(K,u)\,\sigma(\D u).$$
In particular, $2\Phi=W$ if $\widehat X$ is isotropic. The hitting functional has similar properties as the mean width functional: it is continuous with respect to the Hausdorff metric, translation invariant, homogeneous of degree one, and  (weakly) increasing under set inclusion. In the non-isotropic case, the hitting functional is better adapted to the process than the mean width, as shown by the following result.

\begin{theorem}\label{T1.2}
If $\widehat X$ has intensity $n$ and $K\in \K$ is a convex body with interior points, then
\begin{equation}\label{1.5}
\bE \Phi(Z_K^{(n)})-\Phi(K)\ll n^{-\frac{2}{d+1}}.
\end{equation}
\end{theorem}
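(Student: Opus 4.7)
The plan is to reduce the bound on $\bE \Phi(Z_K^{(n)}) - \Phi(K)$ to a uniform tail estimate for the excess of the support function of $Z_K^{(n)}$ in a single direction and then exploit the Poisson vacancy formula. Because $\Phi(M) = \int_{\Sd} h(M,u)\,\varphi(\D u)$ and $K \subset Z_K^{(n)}$ almost surely, Fubini gives
\[
\bE \Phi(Z_K^{(n)}) - \Phi(K) = \int_{\Sd} \bE\bigl[h(Z_K^{(n)}, u) - h(K, u)\bigr]\,\varphi(\D u)
\]
with a non-negative integrand; since $\varphi$ is a probability measure, it suffices to prove $\bE[h(Z_K^{(n)}, u) - h(K, u)] \ll n^{-2/(d+1)}$ uniformly in $u \in \Sd$. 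I would combine this with the layer-cake identity
\[
\bE\bigl[h(Z_K^{(n)}, u) - h(K, u)\bigr] = \int_0^\infty \bP\bigl(h(Z_K^{(n)}, u) > h(K, u) + t\bigr)\,\D t.
\]

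The next step is, for each fixed $(u,t)$, to construct a measurable set $\mathcal{A}(u,t) \subset \Ha$ of witness hyperplanes such that $\widehat X \cap \mathcal{A}(u,t) \ne \emptyset$ forces $h(Z_K^{(n)}, u) \le h(K, u) + t$. The Poisson vacancy formula then yields
\[
\bP\bigl(h(Z_K^{(n)}, u) > h(K, u) + t\bigr) \le \exp\bigl(-\widehat\Theta(\mathcal{A}(u, t))\bigr),
\]
and a lower bound $\widehat\Theta(\mathcal{A}(u,t)) \ge c_K\, n\, t^{(d+1)/2}$, uniform in $u$, completes the argument via $\int_0^\infty \exp(-c_K n t^{(d+1)/2})\,\D t \ll n^{-2/(d+1)}$. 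The exponent $(d+1)/2$ is the natural one because of the cap-volume asymptotics near any support point of $K$: the inscribed ball supplied by the interior-points hypothesis forces at least a quadratic lower bound on $v \mapsto h(K,v) - \langle p_u, v\rangle$ near $v = u$, where $p_u$ denotes a support point of $K$ in direction $u$, and the explicit formula for $\widehat\Theta$ then produces the desired cap-type estimate.

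The hard part is the construction of $\mathcal{A}(u,t)$, because $h(Z_K^{(n)}, u)$ cannot be controlled by a single hyperplane of $\widehat X$ unless its normal is exactly parallel to $u$, which is a $\varphi$-null event. The natural candidate---the set of hyperplanes that separate the single point $p_u + tu$ from $K$---only captures the event $\{p_u + tu \in Z_K^{(n)}\}$, which is strictly weaker than $\{h(Z_K^{(n)},u) > h(K,u)+t\}$, since $Z_K^{(n)}$ can protrude through the hyperplane $H(u, h(K,u)+t)$ laterally without containing $p_u+tu$ itself. I expect the gap to be closed either by (a) combining this witness set with an $\varepsilon$-net argument on the lateral slice $H(u, h(K,u)+t) \cap Z_K^{(n)}$, whose effective diameter can be bounded with high probability by the Poisson process itself, or (b) applying the multivariate Mecke formula directly to the vertex process of $Z_K^{(n)}$ to estimate $\bE\bigl[\#\{\text{vertices of } Z_K^{(n)} \text{ in } H^+(u,h(K,u)+t)\}\bigr]$, where the exponent $(d+1)/2$ emerges from the Laplace-type asymptotics of the resulting vertex integral.
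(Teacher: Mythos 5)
Your proposal has two genuine gaps, and the first one is structural. You reduce the claim to the uniform estimate $\bE[h(Z_K^{(n)},u)-h(K,u)]\ll n^{-2/(d+1)}$ for $u\in\Sd$. But Theorem \ref{T1.2} is stated with \emph{no} hypothesis on the directional distribution $\varphi$, and in that generality no such uniform bound can hold: the rate at which $h(Z_K^{(n)},u)$ descends to $h(K,u)$ in a fixed direction $u$ is governed by how much $\varphi$-mass sits near $u$, and this can be arbitrarily small (think of $\varphi$ with countably many atom pairs of mass $2^{-k}$, or a continuous density vanishing at some point). For badly covered directions $u$ the expectation $\bE[h(Z_K^{(n)},u)-h(K,u)]$ decays with a constant that blows up as the local $\varphi$-mass shrinks; the integrated quantity $\bE\Phi(Z_K^{(n)})-\Phi(K)$ stays small only because the weight $\varphi(\D u)$ in the integral compensates exactly where the integrand is large. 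That compensation is the whole point of working with the hitting functional (the paper remarks that $Z_K^{(n)}$ need not converge to $K$ at all, yet $\bE\Phi(Z_K^{(n)})\to\Phi(K)$), and your first step discards it. Concretely, your target inequality $\widehat\Theta(\mathcal{A}(u,t))\ge c_K\,n\,t^{(d+1)/2}$ is false for general $\varphi$: if $\varphi$ assigns no mass to a neighborhood of $u$ and $K$ is smooth at $p_u$, the set of hyperplanes separating $p_u+tu$ from $K$ has $\widehat\Theta$-measure zero for small $t>0$.

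The second gap is the one you flag yourself: even under an assumption like $\varphi\ge a_1\sigma$, where the single-direction reduction would be legitimate, the construction of the witness set $\mathcal{A}(u,t)$ is the entire difficulty, and neither of your repair strategies is carried out (route (b), bounding $\bP(h(Z_K^{(n)},u)>h(K,u)+t)$ by the expected number of vertices of $Z_K^{(n)}$ beyond $H(u,h(K,u)+t)$ via the Mecke formula, can be made to work, but it is a substantial computation in its own right). The paper avoids all of this with a global argument: Bronshtein's entropy bound (Lemma \ref{L2.2}) supplies an $\varepsilon$-net of at most $c_1\exp\varepsilon^{-(d-1)/2}$ convex bodies in $B_R$; for each net element $M_j$ a slightly shrunken body $M_j^{-\varepsilon}\supseteq K$ is built so that the event $\Phi(Z_K^{(n)})-\Phi(K)>t$ forces the Poisson void event that no hyperplane of $\widehat X$ separates some $M_j^{-\varepsilon}$ with $\Phi(M_j)-\Phi(K)>t-\varepsilon$ from $K$, an event of probability $\exp[-2n(\Phi(M_j^{-\varepsilon})-\Phi(K))]$. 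A union bound over the net and the choice $\varepsilon=n^{-2/(d+1)}$, which balances the entropy term $\varepsilon^{-(d-1)/2}$ against $n\varepsilon$, yield the deviation inequality of Theorem \ref{T2.1} and hence all moments. The exponent $2/(d+1)$ thus comes from metric entropy rather than from cap asymptotics, and that is precisely why no assumption on $\varphi$ is needed.
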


It should be noted that there is no further assumption on $K$ and no assumption on $\varphi$. Under these general circumstances, the $K$-cell $Z_K^{(n)}$ need not converge a.s. to $K$ as $n\to \infty$ (for example, if $\varphi$ is discrete and $K$ is smooth); nevertheless, $\bE \Phi(Z_K^{(n)})$ converges to $\Phi(K)$.

In Section \ref{sec2}, we shall derive with Theorem \ref{T2.1} a deviation estimate for $\Phi(Z_K^{(n)})-\Phi(K)$. From it, a more general version of Theorem \ref{T1.2} is derived, namely Theorem \ref{T2.2}, which provides moment estimates for $\Phi(Z_K^{(n)})-\Phi(K)$. Theorem \ref{T1.1} is then proved in Section \ref{sec3}, where we also obtain precise asymptotic relations under additional assumptions. In Section \ref{sec4} we  deal with the facet number of the $K$-cell. We prove an estimate for its moments and, under special assumptions, two asymptotic relations.

\section{The hitting functional}\label{sec2}

The assumption from now on is that $\widehat X$ is a stationary Poisson hyperplane process in $\R^d$ with directional distribution $\varphi$ and intensity $n\in \N$, and that $K\in\K$ is a convex body with interior points. Since $\widehat X$ is stationary, we can assume without loss of generality that the origin $o$ of $\R^d$ is contained in the interior of $K$. We assume, in fact, that $o$ is the centroid of $K$ (in order that constants depending on $K$ and the position of the origin will depend only on $K$). The $K$-cell of $\widehat X$ is defined by
$$ Z_K^{(n)}:= \bigcap_{H\in\widehat X,\,H\cap K=\emptyset} H_K^-,$$
where for a hyperplane $H$ not intersecting $K$ we denote by $H_K^-$ the closed halfspace bounded by $H$ that contains $K$.

For a bounded subset $M\subset\R^d$ we denote by $R_o(M)$ the radius of the smallest ball with center $o$ that contains $M$. The radius $R_o(K)$ is abbreviated by $R_o$. We quote the following lemma from \cite{Sch19}.

\begin{lemma}\label{L2.1}
There are constants $a,b>0$, depending only on $\varphi$, such that
$$ \bP\left(R_o(Z_K^{(n)})>b(R_o+x) \right)\le 2de^{-anx} \quad\mbox{for }x\ge 0.$$
\end{lemma}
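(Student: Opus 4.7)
I would prove the lemma via a covering argument built on the Poisson character of $\widehat X$. The basic ingredient is the \emph{point-in-cell formula}: for any $y\in\R^d$, the event $\{y\in Z_K^{(n)}\}$ is the event that $\widehat X$ contains no hyperplane strictly separating $y$ from $K$, and this set of separating hyperplanes has $\widehat\Theta$-measure $n\int_{\Sd}(\langle y,v\rangle-h(K,v))_+\,\varphi(\D v)$. The Poisson property therefore gives
\[
\bP(y\in Z_K^{(n)})=\exp\Bigl(-n\int_{\Sd}(\langle y,v\rangle-h(K,v))_+\,\varphi(\D v)\Bigr).
\]

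Next, I would obtain a uniform lower bound on the exponent depending only on $\varphi$. Using $(\langle y,v\rangle-h(K,v))_+\ge\langle y,v\rangle_+-h(K,v)$, the evenness of $\varphi$, and compactness of $\Sd$, the quantity $c_\varphi:=\inf_{u\in\Sd}\int_{\Sd}\langle u,v\rangle_+\,\varphi(\D v)$ is strictly positive whenever $\varphi$ is not concentrated in a great subsphere (a prerequisite for $Z_K^{(n)}$ being a.s.\ bounded), and $h(K,v)\le R_o$ on $\Sd$. One concludes $\bP(y\in Z_K^{(n)})\le\exp(-nc_\varphi|y|+nR_o)$.

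The third step is the geometric reduction from the continuum event $\{R_o(Z_K^{(n)})>r\}$ to a finite union of point-in-cell events. Using that $Z_K^{(n)}\supset K$ is convex and contains a ball $B_\delta(o)$ around the centroid $o$, if any $y\in Z_K^{(n)}$ has $|y|\ge r$ then the cone $\conv(\{y\}\cup B_\delta(o))$ lies inside $Z_K^{(n)}$, and in particular contains a ball of radius $\delta/2$ centered at $y/2$. Covering the sphere of radius $r/2$ by a $(\delta/2)$-net $N_r$ of size $O(r^{d-1})$ then gives $\{R_o(Z_K^{(n)})>r\}\subset\bigcup_{y_j\in N_r}\{y_j\in Z_K^{(n)}\}$. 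Combining with the previous steps,
\[
\bP(R_o(Z_K^{(n)})>r)\le C\,r^{d-1}\exp(-nc_\varphi r/2 +nR_o),
\]
and substituting $r=b(R_o+x)$ with $b$ large enough (in terms of $c_\varphi$) that the exponent dominates the polynomial prefactor delivers a bound of the announced form $2d\,e^{-anx}$.

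The main obstacle is the third step, the geometric reduction: once the continuum-to-finite reduction is in place, the Poisson-exponential calculation and the absorption of a polynomial prefactor are essentially routine. The factor $2d$ in the statement is suggestive of an alternative reduction using only the $2d$ coordinate directions $\pm e_1,\ldots,\pm e_d$ (via $\max_i|y_i|\ge|y|/\sqrt d$), replacing the sphere-net by the pointwise bounds for the $2d$ events $\{h(Z_K^{(n)},\pm e_i)>r/\sqrt d\}$; this would streamline the third step considerably, although at the cost of a more delicate argument to relate each of these support-function events to a concrete point-in-cell event.
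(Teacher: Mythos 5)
Your steps 1 and 2 are essentially correct (two small points: with the paper's normalization of $\widehat\Theta$, which integrates over all of $\Sd$ and hence counts each hyperplane twice, the exponent is $2n(\Phi(K^y)-\Phi(K))=2n\int_{\Sd}(\langle y,v\rangle-h(K,v))_+\,\varphi(\D v)$ rather than $n\int(\cdots)$ --- harmless for an upper bound; and before invoking the net you should replace a witness $y$ with $|y|>r$ by the point of $[o,y]$ at distance exactly $r$, which lies in $Z_K^{(n)}$ by convexity). The genuine gap is in step 3. Your $(\delta/2)$-net of the sphere of radius $r/2$ has cardinality of order $(r/\delta)^{d-1}$, where $\delta$ is the radius of a ball about the centroid contained in $K$; the union bound therefore produces a prefactor depending on $K$, not only on $\varphi$. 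After substituting $r=b(R_o+x)$ the polynomial growth in $x$ can indeed be absorbed into the exponential by shrinking $a$, but a constant of order $(R_o/\delta)^{d-1}$ survives, and this is unbounded over convex bodies with, say, $R_o=1$ (very flat $K$), while the available factor $e^{-nR_o}$ cannot compensate for it. So your scheme yields $\bP\bigl(R_o(Z_K^{(n)})>b(R_o+x)\bigr)\le C(K,\varphi)\,e^{-anx}$, which is strictly weaker than the stated bound with prefactor $2d$ and $a,b$ depending only on $\varphi$; no choice of $b=b(\varphi)$ repairs the $K$-dependence inside a point-covering argument of this kind.

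For context: the paper gives no proof of this lemma --- it is quoted from \cite{Sch19} --- so there is nothing in-paper to compare against. The proof there is of the ``confinement'' type that your closing paragraph gestures at, but with one essential twist: the $2d$ directions cannot be taken to be $\pm e_1,\dots,\pm e_d$ (for discrete $\varphi$ no hyperplane with normal near $e_i$ need ever occur). Instead one chooses $2d$ spherical sets $S_1,\dots,S_{2d}$ around points of $\supp\varphi$, with $\varphi(S_i)\ge\beta_2>0$ and such that every $\zeta\in\Sd$ satisfies $\langle\zeta,u\rangle\ge\beta_1>0$ for all $u$ in at least one $S_i$ (possible because the even measure $\varphi$ is not concentrated on a great subsphere). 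If each slab $\{H(u,\tau):u\in S_i,\;R_o<\tau\le\rho\}$ contains a hyperplane of $\widehat X$, then $Z_K^{(n)}\subset(\rho/\beta_1)B^d$; the union bound over the $2d$ void events, each of probability $e^{-n\varphi(S_i)(\rho-R_o)}$, gives exactly the prefactor $2d$ and constants depending only on $\varphi$. Your weaker, $K$-dependent bound would in fact suffice for every application of the lemma in this paper (all downstream constants may depend on $K$), but it does not establish the lemma as stated.
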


We set $R:= bR_o+1$ and $B_R:=RB^d$, where $B^d$ denotes the unit ball of $\R^d$. Then Lemma \ref{L2.1} shows that
\begin{equation}\label{2.1}
\bP\left(R_o(Z_K^{(n)})\ge R+t\right)\le 2de^{-cn(t+1)} \quad\mbox{for }t\ge 0
\end{equation}
(with $c=a/b$) and, therefore,
\begin{equation}\label{2.2}
\bP\left(Z_K^{(n)}\not\subset B_R\right)\le 2de^{-cn}.
\end{equation}
For this reason, we may restrict the subsequent estimate to the $K$-cells contained in $B_R$.

The following result about $\varepsilon$-nets in the space of convex bodies with the Hausdorff metric $d_H$ was proved by Bronshtein \cite{Bro76}.

\begin{lemma}\label{L2.2}
Let $\varepsilon>0$. There exist convex bodies $M_1,\dots,M_{N_\varepsilon}\subset B_R$, where
$$N_\varepsilon\le c_1\exp\varepsilon^{-\frac{d-1}{2}}$$
with a constant $c_1$ depending only on $d$ and $R$, such that to each convex body $M\subset B_R$ there exists a number $j\in\{1,\dots,N_\varepsilon\}$ with $d_H(M,M_j)\le\varepsilon$.
\end{lemma}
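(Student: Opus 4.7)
The plan is to approximate an arbitrary convex body $M \subset B_R$ by a polytope with a controlled number of vertices, then to discretize these vertices on a fixed net in $B_R$ and to count the resulting polytopes. As the first ingredient I would invoke the classical inscribed-polytope approximation theorem (itself due to Bronshtein and Ivanov): for every convex body $M \subset B_R$ there exists a polytope $P \subset M$ with at most $N := c_2 \varepsilon^{-(d-1)/2}$ vertices such that $d_H(M,P) \le \varepsilon/2$, where $c_2$ depends only on $d$ and $R$. A standard proof covers $\Sd$ by $O(\varepsilon^{-(d-1)/2})$ spherical caps of geodesic radius $\sim\sqrt{\varepsilon/R}$ and selects, for each cap, a point on $\bd M$ whose outer normal cone meets the cap; a second-order estimate along each cap yields the desired accuracy.

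Next I would fix a maximal $(\varepsilon/2)$-separated subset $\{x_1,\ldots,x_L\} \subset B_R$, for which a volume comparison gives $L \le c_3 \varepsilon^{-d}$. Rounding each vertex of the approximating polytope $P$ to a nearest net point produces a polytope $P'$ with vertices in $\{x_1,\ldots,x_L\}$ and with $d_H(P,P') \le \varepsilon/2$, hence $d_H(M,P') \le \varepsilon$. Since such polytopes are determined by subsets of the net of size at most $N$, their number is bounded by
$$
\sum_{k=0}^{N} \binom{L}{k} \le (N+1)\, L^N \le \exp\bigl(c\,\varepsilon^{-(d-1)/2}\log\varepsilon^{-1}\bigr).
$$

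The main obstacle is that this uniform discretization loses an extra factor $\log\varepsilon^{-1}$ in the exponent, and removing it is the sharp point of Bronshtein's estimate. The remedy is to use a net adapted to the anisotropic geometry of the caps from the first step: within each cap, the vertices of $P$ lie in a prism of tangential width $O(\sqrt{\varepsilon})$ and normal thickness $O(\varepsilon)$, so a net with spacing $\sqrt{\varepsilon}$ in the $d-1$ tangential directions and $\varepsilon$ in the normal direction contains only $O(1)$ candidate positions per cap. With this two-scale discretization, each of the $O(\varepsilon^{-(d-1)/2})$ caps contributes $O(1)$ bits of information, the total number of configurations is bounded by $\exp\bigl(c_1 \varepsilon^{-(d-1)/2}\bigr)$, and the asserted estimate follows.
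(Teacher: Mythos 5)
The paper does not prove this lemma at all: it is quoted verbatim from Bronshtein \cite{Bro76}, so your proposal has to be judged against the classical proof of that entropy bound. Your first two steps are sound: the inscribed-polytope theorem of Bronshtein--Ivanov with $O(\varepsilon^{-(d-1)/2})$ vertices is correct, the rounding to a uniform $(\varepsilon/2)$-net is correct, and you rightly observe that the resulting count $\exp\bigl(c\,\varepsilon^{-(d-1)/2}\log\varepsilon^{-1}\bigr)$ falls short of the claim by a logarithm in the exponent. The problem is that your third step, which is supposed to remove that logarithm and is the entire content of the theorem, does not work as described. The ``prism of tangential width $O(\sqrt\varepsilon)$ and normal thickness $O(\varepsilon)$'' is centred at a boundary point of $M$ and aligned with its outer normal, i.e.\ it depends on the unknown body you are trying to encode; a net must be fixed in advance, uniformly over all $M\subset B_R$. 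For a fixed cap of directions, the admissible positions of the associated vertex sweep out a region of diameter comparable to $R$ both radially (the support value $h(M,u)$ ranges over $[-R,R]$) and tangentially (for a polytope, every point of a facet has the same normal, so the set of boundary points whose normal cone meets a given cap can be an entire facet of diameter $\approx R$). Covering that region by your anisotropic boxes therefore requires $\mathrm{poly}(\varepsilon^{-1})$ candidates per cap, not $O(1)$, and the $\log\varepsilon^{-1}$ reappears. As stated, the ``$O(1)$ candidates per cap'' claim is circular.

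What actually kills the logarithm in Bronshtein's argument (and in all proofs I know of the sharp bound) is the \emph{correlation between neighbouring caps}, which your count ignores by treating the caps independently. Concretely: encode, over a fixed $\sqrt\varepsilon$-net of directions, the support values and subgradients of $h(M,\cdot)$; the subgradient is a monotone map with range in $B_R$, so its total variation along a chain of $\varepsilon^{-1/2}$ neighbouring caps is $O(R)$, and by Jensen's inequality the total number of bits needed to record all the \emph{increments} to the required precision is $O(\varepsilon^{-(d-1)/2})$ with no logarithm, even though a single increment may occasionally be large. Equivalently one can run a multiscale refinement $\varepsilon_k=2^{-k}$ in which, given the approximation at scale $2\varepsilon$, each datum at scale $\varepsilon$ is confined to $O(1)$ candidates. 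Either device would repair your step 3, but some such use of convexity across caps is indispensable; a single-scale net with independent per-cap choices cannot give the bound $c_1\exp\varepsilon^{-(d-1)/2}$.
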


We have learned about the usefulness of this lemma for random approximation from a paper of Brunel \cite{Bru17}, where it is applied to convex hulls of random points. It is mentioned in \cite{Bru17} that this approach was inspired by Theorem 1 in \cite{KST95}. We give a dual and Poisson version of Brunel's approach. With the aid of Lemma \ref{L2.2}, we prove the following deviation estimate.

\begin{theorem}\label{T2.1}
There are constants $c(K), n_0(K)>0$, depending only on $K$, such that
$$ \bP\left(Z_K^{(n)}\subset B_R \,\wedge\,\Phi(Z_K^{(n)})-\Phi(K)\ge c(K)n^{-\frac{2}{d+1}}+x  \right)\le c_1e^{-nx} \quad\mbox{for }x\ge 0,$$
for all $n\in\N$ with $n\ge n_0(K)$.
\end{theorem}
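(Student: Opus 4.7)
The plan is to combine an exact Poisson identity with Bronshtein's covering (Lemma \ref{L2.2}), dualizing Brunel's \cite{Bru17} approach. The probabilistic input is that for any convex body $M$ with $K \subseteq M$, the event $\{Z_K^{(n)} \supseteq M\}$ agrees up to a null event with $\{\widehat X(\Ha_M \setminus \Ha_K) = 0\}$, and since $\widehat\Theta(\Ha_M \setminus \Ha_K) = 2n(\Phi(M) - \Phi(K))$, this yields the exact identity
$$\bP\bigl(Z_K^{(n)} \supseteq M\bigr) = e^{-2n(\Phi(M) - \Phi(K))}.$$
On $\{Z_K^{(n)} \subset B_R\}$ the random body sits inside a fixed ball, so Lemma \ref{L2.2} at scale $\varepsilon = \varepsilon_n \sim n^{-2/(d+1)}$ will reduce the problem to a union bound over finitely many deterministic candidates.

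Fix $\varepsilon = c_2 n^{-2/(d+1)}$ and let $M_1, \dots, M_{N_\varepsilon}$ be the net from Lemma \ref{L2.2}. The crucial step is an inner approximation: to each $M_j$ attach the deterministic convex body
$$\widetilde M_j := \conv\bigl(K \cup (M_j \ominus \varepsilon B^d)\bigr),$$
interpreted as $K$ when $M_j \ominus \varepsilon B^d = \emptyset$. I plan to use three properties of this choice. (i) $K \subseteq \widetilde M_j$, so the Poisson identity applies with $M = \widetilde M_j$. (ii) On $\{d_H(Z_K^{(n)}, M_j) \le \varepsilon\}$, the support-function chain $h(M_j \ominus \varepsilon B^d, \cdot) \le h(M_j, \cdot) - \varepsilon \le h(Z_K^{(n)}, \cdot)$ forces $M_j \ominus \varepsilon B^d \subseteq Z_K^{(n)}$, and convexity of $Z_K^{(n)}$ promotes this to $\widetilde M_j \subseteq Z_K^{(n)}$. (iii) Since $\Phi$ is $1$-Lipschitz in $d_H$ (because $\varphi$ is a probability measure) and a standard inner-parallel-body estimate yields $d_H(M_j, M_j \ominus \varepsilon B^d) \le C(K) \varepsilon$ with $C(K)$ depending on the inradius of $K$, one gets $\Phi(\widetilde M_j) \ge \Phi(M_j) - C(K) \varepsilon$. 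The threshold $n_0(K)$ is determined by requiring $\varepsilon_n$ to be small compared with the inradius of $K$, ensuring $M_j \ominus \varepsilon B^d \neq \emptyset$ whenever $M_j$ is $\varepsilon$-close to $Z_K^{(n)} \supseteq K$.

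Set $\alpha := c(K) n^{-2/(d+1)} + x$ and let $E$ denote the event of the theorem. On $E$, some index $j$ satisfies $d_H(Z_K^{(n)}, M_j) \le \varepsilon$; the Lipschitz property together with (iii) then gives $\Phi(\widetilde M_j) - \Phi(K) \ge \alpha - (C(K)+1)\varepsilon$. Restricting the union bound to indices satisfying this deterministic lower bound and applying the Poisson identity,
$$\bP(E) \le N_\varepsilon \cdot e^{-2n(\alpha - (C(K)+1)\varepsilon)} \le c_1 \cdot e^{\varepsilon^{-(d-1)/2} + 2(C(K)+1) n\varepsilon - 2n\alpha}.$$
With $\varepsilon = c_2 n^{-2/(d+1)}$ both $\varepsilon^{-(d-1)/2}$ and $n\varepsilon$ are of order $n^{(d-1)/(d+1)}$, and $2n\alpha = 2c(K) n^{(d-1)/(d+1)} + 2nx$; choosing $c_2$ to minimize $c_2^{-(d-1)/2} + 2(C(K)+1) c_2$ and then $c(K)$ large enough to absorb the resulting constant yields $\bP(E) \le c_1 e^{-nx}$ for $n \ge n_0(K)$. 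The main obstacle I expect is the inner-approximation construction itself: $\widetilde M_j$ must simultaneously lie inside $Z_K^{(n)}$, contain $K$, and have hitting functional close to that of $M_j$; controlling the inner-parallel-body Hausdorff distance in terms of the geometry of $K$ is precisely where $C(K)$ and the threshold $n_0(K)$ originate. The remaining steps (Bronshtein covering, Poisson identity, exponent balancing) are then mechanical.
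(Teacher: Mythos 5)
Your proposal is correct and takes essentially the same approach as the paper: an $\varepsilon$-net from Lemma \ref{L2.2} at scale $\varepsilon\sim n^{-2/(d+1)}$, an inner approximation of each net element that contains $K$ and is forced into $Z_K^{(n)}$ on the relevant event, the Poisson void probability $\exp[-2n(\Phi(\cdot)-\Phi(K))]$, a union bound, and the balancing of $\varepsilon^{-(d-1)/2}$ against $n\varepsilon$. The only difference is cosmetic: you build the inner approximation from the inner parallel body $M_j\ominus\varepsilon B^d$, whereas the paper uses the dilate $\left(1-\frac{\varepsilon}{r}\right)M_j$ (which is contained in your body whenever $rB^d\subseteq M_j$); both yield the same $O(\varepsilon)$ control on $\Phi$ and the same containment in $Z_K^{(n)}$.
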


\begin{proof}
For $M\in\K$ we have
\begin{equation}\label{2.3}
|\Phi(M)-\Phi(K)|\le \int_{\Sd} |h(M,u)-h(K,u)|\,\varphi(\D u) \le d_H(M,K).
\end{equation}

Since we have assumed that $o$ is the centroid of $K$, there is a number $r>0$, depending only on $K$, such that $2rB^d\subset {\rm int}\,K$.
Let $0<\varepsilon<r/2$ be given. With each $M_j$, $j\in\{1,\dots,N_\varepsilon\}$, we associate the convex body
$$ M_j^{-\varepsilon}:={\rm conv}\left(K\cup\left(1-\frac{\varepsilon}{r}\right)M_j\right).$$
Then $K\subseteq M_j^{-\varepsilon}$.

Under the condition that $Z_K^{(n)}\subset B_R$, by Lemma \ref{L2.2} there is a (random) number $k\in\{1,\dots,N_\varepsilon\}$ with $d_H(Z_K^{(n)},M_k)\le\varepsilon$. Then $rB^d\subset {\rm int}\,M_k$, and from (\ref{2.3}) it follows that
$$\Phi(M_k)-\Phi(K)\ge \Phi(Z_K^{(n)})-\Phi(K)-\varepsilon.$$
For $u\in\Sd$ we have $h(M_k^{-\varepsilon},u)=h(K,u)$ or $h(M_k^{-\varepsilon},u)=(1-\varepsilon/r)h(M_k,u)$. In the first case,
$$ h(M_k^{-\varepsilon},u)=h(K,u)< h(Z_K^{(n)},u)$$
almost surely. In the second case, because of $|h(Z_K^{(n)},u)-h(M_k,u)|\le \varepsilon$ and $h(M_k,u)> r$, we have
$$ h(M_k^{-\varepsilon},u) =\left(1-\frac{\varepsilon}{r}\right)h(M_k,u) < h(M_k,u)-\varepsilon\le h(Z_K^{(n)},u).$$
Therefore,
$$ M_k^{-\varepsilon}\subset{\rm int}\, Z_K^{(n)}$$
almost surely. It follows that a.s. no hyperplane of $\widehat X\setminus\Ha_K$ intersects $M_k^{-\varepsilon}$.

In the following we write
$$ I_t:= \{j\in \{1,\dots,N_\varepsilon\}:\Phi(M_j)-\Phi(K)>t\}$$
for $t\in\R$. Let $t\ge\varepsilon$ be given. We  obtain
\begin{eqnarray*}
&&\bP\left(Z_K^{(n)}\subset B_R \,\wedge\,\Phi(Z_K^{(n)})-\Phi(K)>t\right)\\
&&\le \bP\left(\exists\, j\in \{1,\dots,,N_\varepsilon\}:\Phi(M_j)-\Phi(K)>t-\varepsilon \mbox{ and } H\cap M_j^{-\varepsilon}=\emptyset\;\forall H\in\widehat X\setminus\Ha_K\right)\\
&& \le \bP\left(\bigcup_{j\in I_{t-\varepsilon}}\left\{H\cap M_j^{-\varepsilon}=\emptyset\;\forall H\in\widehat X\setminus\Ha_K\right\}\right)\\
&&\le \sum_{j\in I_{t-\varepsilon}}\bP\left(H\cap M_j^{-\varepsilon}=\emptyset\;\forall H\in\widehat X\setminus\Ha_K\right)\\
&&= \sum_{j\in I_{t-\varepsilon}} \exp\left[-\widehat\Theta\left(\Ha_{M_j^{-\varepsilon}} \setminus\Ha_K\right)\right]\\
&&= \sum_{j\in I_{t-\varepsilon}} \exp\left[-2n\left(\Phi\left(M_j^{-\varepsilon}\right)-\Phi(K)\right)\right].
\end{eqnarray*}
Here we have used that $K\subseteq M_j^{-\varepsilon}$ and hence $\Ha_ K\subseteq \Ha_{M_j^{-\varepsilon}}$. If $j\in I_{t-\varepsilon}$, we also have (using the translation invariance and monotonicity of $\Phi$)
$$ \Phi(M_j^{-\varepsilon})\ge \left(1-\frac{\varepsilon}{r}\right)\Phi(M_j)\ge \left(1-\frac{\varepsilon}{r}\right)(\Phi(K)+t-\varepsilon)$$
and hence, using that $\varepsilon <r/2$,
\begin{eqnarray*}
2n\left(\Phi\left(M_j^{-\varepsilon}\right)-\Phi(K)\right) &\ge & 2n\left[\left(1-\frac{\varepsilon}{r}\right)(t-\varepsilon)-\frac{\varepsilon}{r}\Phi(K)\right]\\
&\ge& n\left[t-\varepsilon-\frac{2\varepsilon}{r}\Phi(K)\right]\\
&=&n[t-c'(K)\varepsilon]
\end{eqnarray*}
with $c'(K):= 1+\frac{2}{r}\Phi(K)$. Therefore, using Lemma \ref{L2.2},
\begin{eqnarray*}
\bP\left(Z_K^{(n)}\subset B_R \,\wedge\,\Phi(Z_K^{(n)})-\Phi(K)>t\right)
&\le& N_\varepsilon\exp\left[-n(t-c'(K)\varepsilon)\right]\\
&\le& c_1\exp \varepsilon^{\frac{1-d}{2}}\exp\left[-n(t-c'(K)\varepsilon)\right]\\
& = &c_1\exp\left[-n(t-c'(K)\varepsilon)+\varepsilon^{\frac{1-d}{2}}\right].
\end{eqnarray*}
This holds for arbitrary $\varepsilon$ with $0<\varepsilon<r/2$ (where $r$ depends on $K$). Now we choose $\varepsilon= n^{-\frac{2}{d+1}}$ with sufficiently large $n$. Since $\varepsilon^{\frac{1-d}{2}}=n\varepsilon$, we obtain
$$ \bP\left(Z_K^{(n)}\subset B_R \,\wedge\,\Phi(Z_K^{(n)})-\Phi(K)>t\right)\le c_1\exp\left[-n(t-c(K)\varepsilon)\right]$$
with $c(K)=2(1+\Phi(K)/r)$.
The choice $t=c(K)n^{-\frac{2}{d+1}}+x$ yields the assertion.
\end{proof}

From this, we can derive upper estimates for the moments of the difference $\Phi(Z_K^{(n)})-\Phi(K)$.

\begin{theorem}\label{T2.2}
For $k\in\N$, we have
\begin{equation}\label{2.5}
\bE \left[\left(\Phi(Z_K^{(n)})-\Phi(K)\right)^k\right]  \le c_2 n^{-\frac{2k}{d+1}},
\end{equation}
where the constant $c_2$ is independent of $n$.
\end{theorem}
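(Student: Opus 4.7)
The plan is to write $\bE[(\Phi(Z_K^{(n)}) - \Phi(K))^k]$ as an integral of tail probabilities via the layer cake formula and split the expectation according to whether the event $\{Z_K^{(n)} \subset B_R\}$ holds. Since $K \subseteq Z_K^{(n)}$ almost surely and $\Phi$ is monotone under set inclusion, the integrand $Y := \Phi(Z_K^{(n)}) - \Phi(K)$ is nonnegative, so
$$ \bE Y^k = \int_0^\infty k t^{k-1} \bP(Y > t)\,\D t. $$

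First I would bound the contribution from the event $\{Z_K^{(n)} \subset B_R\}$. Setting $\alpha := c(K) n^{-2/(d+1)}$ and applying Theorem \ref{T2.1}, I split the integral $\int_0^\infty k t^{k-1} \bP(Y > t,\, Z_K^{(n)} \subset B_R)\,\D t$ at $t = \alpha$. The contribution from $[0, \alpha]$ is at most $\alpha^k = O(n^{-2k/(d+1)})$. For the contribution from $[\alpha, \infty)$, Theorem \ref{T2.1} gives $\bP(Y > t, Z_K^{(n)} \subset B_R) \le c_1 e^{-n(t-\alpha)}$, and after substituting $s = t - \alpha$ and using $(s+\alpha)^{k-1} \le 2^{k-1}(s^{k-1} + \alpha^{k-1})$, the integral is dominated by a term of order $n^{-k} + \alpha^{k-1}/n$, both of which are $O(n^{-2k/(d+1)})$ for large $n$ (using that $k \ge 2k/(d+1)$ and $1/n \le \alpha$).

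Next I would handle the complementary event $\{Z_K^{(n)} \not\subset B_R\}$. Because $o \in K \subseteq Z_K^{(n)}$ and $\varphi$ is a probability measure,
$$ \Phi(Z_K^{(n)}) = \int_{\Sd} h(Z_K^{(n)}, u)\,\varphi(\D u) \le R_o(Z_K^{(n)}), $$
so $Y \le R_o(Z_K^{(n)})$. Writing $T := (R_o(Z_K^{(n)}) - R)_+$, the sharper bound (\ref{2.1}) yields $\bP(T \ge t) \le 2d\, e^{-cn(t+1)}$, which gives $\bE T^j = O(e^{-cn}/n^j)$ for every $j$ and $\bP(T > 0) \le 2d e^{-cn}$ by (\ref{2.2}). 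Then
$$ \bE\bigl[Y^k \mathbbm{1}\{Z_K^{(n)} \not\subset B_R\}\bigr] \le \bE\bigl[(R + T)^k \mathbbm{1}\{T > 0\}\bigr] \le 2^{k-1}\bigl(R^k \bP(T > 0) + \bE T^k\bigr), $$
which is exponentially small in $n$ and thus negligible compared to $n^{-2k/(d+1)}$.

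Adding the two contributions gives the desired estimate. The only real obstacle is verifying that the exponential tail from Theorem \ref{T2.1} is strong enough to absorb the polynomial factor $kt^{k-1}$ after integration; this is where the change of variables $s = t - \alpha$ and the crude binomial bound on $(s+\alpha)^{k-1}$ are essential, and it ultimately works because the exponential rate $n$ in Theorem \ref{T2.1} is faster than the polynomial order we are trying to match.
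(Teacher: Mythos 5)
Your proposal is correct and follows essentially the same route as the paper: split the expectation according to the event $\{Z_K^{(n)}\subset B_R\}$, bound the exceptional part via $\Phi(Z_K^{(n)})-\Phi(K)\le R_o(Z_K^{(n)})$ together with the tail bound (\ref{2.1}), and control the main part by the layer cake formula and the deviation estimate of Theorem \ref{T2.1} with threshold $c(K)n^{-2/(d+1)}$. The only cosmetic differences are the order in which you apply the layer cake formula and your more explicit bookkeeping of the integral over $[\alpha,\infty)$; you should also note, as the paper does, that Theorem \ref{T2.1} only applies for $n\ge n_0(K)$, so the constant must be adjusted to cover small $n$.
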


\begin{proof}
With $B_R$ as defined after Lemma \ref{L2.1}, we split
\begin{eqnarray*}
\bE \left[\left(\Phi(Z_K^{(n)})-\Phi(K)\right)^k\right] &=& \bE\left[{\mathbbm 1}\{Z_K^{(n)}\not\subset B_R\}\left(\Phi(Z_K^{(n)})-\Phi(K)\right)^k\right]\\
&&  +\,\bE\left[{\mathbbm 1}\{Z_K^{(n)}\subset B_R\}\left(\Phi(Z_K^{(n)})-\Phi(K)\right)^k\right].
\end{eqnarray*}
For the first summand we get
\begin{eqnarray*}
&&  \bE\left[{\mathbbm 1}\{Z_K^{(n)}\not\subset B_R\}\left(\Phi(Z_K^{(n)})-\Phi(K)\right)^k\right]\allowdisplaybreaks \\
&& \le \bE\left[{\mathbbm 1}\{R_o(Z_K^{(n)})>R\}R_o(Z_K^{(n)})^k\right] \allowdisplaybreaks\\
&& = \int_0^\infty \bP\left({\mathbbm 1}\{R_o(Z_K^{(n)})>R\}R_o(Z_K^{(n)})^k>t\right)\D t.
\end{eqnarray*}
We note that $ {\mathbbm 1}\{R_o(Z_K^{(n)})>R\}R_o(Z_K^{(n)})^k>t$ implies $R_o(Z_K^{(n)})>R$ and $R_o(Z_K^{(n)})^k>t$. Therefore, substituting $t=(R+x)^k$ for $t\ge R^k$ and using (\ref{2.1}), we get
\begin{eqnarray*}
&&  \bE\left[{\mathbbm 1}\{Z_K^{(n)}\not\subset B_R\}\left(\Phi(Z_K^{(n)})-\Phi(K)\right)^k\right]\allowdisplaybreaks \\
&& \le  \int_0^{R^k} \bP\left(R_o(Z_K^{(n)})>R\right)\D t +\int_{R^k}^\infty \bP\left( R_o(Z_K^{(n)})^k>t\right)\D t \allowdisplaybreaks\\
&& \le R^k 2de^{-cn} + \int_0^\infty \bP\left(R_o(Z_K^{(n)})>R+x\right)k(R+x)^{k-1}\D x\allowdisplaybreaks\\
&& \le  R^k 2d e^{-cn} +  2dk\int_0^\infty e^{-cn(x+1)}(R+x)^{k-1}\,\D x\allowdisplaybreaks\\
&& \le c_3 e^{-cn}.
\end{eqnarray*}
For the estimation of the second summand, we write $\varepsilon:= c(K)n^{-\frac{2}{d+1}}$ and apply Theorem \ref{T2.1}, to obtain
\begin{eqnarray*}
&& \bE\left[{\mathbbm 1}\{Z_K^{(n)}\subset B_R\}\left(\Phi(Z_K^{(n)})-\Phi(K)\right)^k\right] \\
&& = \int_0^\infty \bP\left({\mathbbm 1}\{Z_K^{(n)}\subset B_R\}(\Phi(Z_K^{(n)})-\Phi(K))^k>t\right)\D t\\
&& \le \varepsilon^k + \int_0^\infty \bP\left({\mathbbm 1}\{Z_K^{(n)}\subset B_R\}(\Phi(Z_K^{(n)})-\Phi(K))>\varepsilon+x\right)k(\varepsilon+x)^{k-1}\D x\\
&& \le \varepsilon^k + \int_0^\infty  c_1 e^{-nx}k(\varepsilon+x)^{k-1}\,\D x\\
&&\le \varepsilon^k + c_4\varepsilon^k.
\end{eqnarray*}
Both estimates together yield the estimate in (\ref{2.5}), first for sufficiently large $n$, but then by adaptation of the constant for all $n$.
\end{proof}

\section{The mean width difference}\label{sec3}

First in this section, we prove Theorem \ref{T1.1}. The upper bound is clear: if (\ref{1.4}) holds, then
$$ W(Z_K^{(n)})-W(K) \le \frac{2}{a_1}[\Phi(Z_K^{(n)})-\Phi(K)],$$
so that the upper estimate in (\ref{1.2}) follows from Theorem \ref{T1.2}.

It remains to prove the lower estimate. This is achieved by extending the proof in \cite[Section 4]{Sch19}. We use the approach of B\'{a}r\'{a}ny and Larman \cite{BL88}, in a dualized version. These authors consider a convex body $K\in \K$ with interior points and the convex hull, denoted by $K_n$, of $n$ independent uniform random points in $K$. For $x\in K$, they define
\begin{equation}\label{BL1}
v(x):= \min\{\lambda_d(K\cap H^+): x\in H^+,\,H^+ \mbox{ a closed halfspace}\},
\end{equation}
where $\lambda_d$ denotes Lebesgue measure in $\R^d$, and for $\varepsilon>0$,
\begin{equation}\label{BL2}
K(\varepsilon) := \{x\in K: v(x)\le\varepsilon\}.
\end{equation}
For sufficiently small $\varepsilon$, the closure of $K\setminus K(\varepsilon)$ was later called the `floating body' of $K$ with parameter $\varepsilon$, and $K(\varepsilon)$ the corresponding `wet part'. One result of B\'{a}r\'{a}ny and Larman \cite[Theorem 1]{BL88} says that
\begin{equation}\label{BL3}
V(K)- \bE V(K_n) \ge {\rm const}\cdot \lambda_d(K(1/n));
\end{equation}
and their Theorem 2 says that
\begin{equation}\label{BL4}
\lambda_d(K(\varepsilon)) \ge {\rm const}\cdot\varepsilon\log^{d-1}(1/\varepsilon)
\end{equation}
for sufficiently small $\varepsilon>0$. (The constant depends also on $K$, if their assumption $V(K)=1$ is deleted.)

First we argue in a dual way. The minimal volume $v(x)$ with $x\in K$ is replaced by a minimal measure of sets of hyperplanes determined by a hyperplane not intersecting $K$. For $y\in\R^d\setminus K$, let $K^y:= {\rm conv}(K\cup \{y\})$, and for hyperplanes $H\in\Ha\setminus \Ha_K$ define
$$ \psi(H):= \min\{\Phi(K^y)-\Phi(K):y\in H\}.$$
To show that the minimum exists, we recall that $o \in{\rm int}\,K$. Writing $y=\|y\|y_1$ with $y_1\in\Sd$, we have $ h(K^y,u) \ge h([o,y],u) = \|y\|\langle y_1,u\rangle^+$ for $u\in\Sd$, where $\langle\cdot\,,\cdot\rangle^+$ denotes the positive part. Therefore,
$$ \Phi(K) =\int_{\Sd} h(K^y,u)\,\varphi(\D u)\ge \|y\|\int_{\Sd} \langle y_1,u\rangle^+\, \varphi(\D u) \ge \|y\| b_0$$
with some constant $b_0>0$, by continuity and since the even measure $\varphi$ is not concentrated on a great subsphere. Now it is clear by continuity that the minimum is attained.

For $t\ge 0$, we define
$$ \Ha_K^\psi(t) := \{H\in \Ha\setminus \Ha_K: \psi(H)\le t\}.$$
Let $H\in\Ha\setminus \Ha_K$, and let $z\in H$ be such that $\psi(H)=\Phi(K^{z})-\Phi(K)$. If no hyperplane of $\widehat X$ separates $z$ and $K$, then $z\in Z_K^{(n)}$ and hence $H\cap Z_K^{(n)}\not=\emptyset$. It follows that
\begin{eqnarray*}
\bP\left(H\cap Z_K^{(n)}\not=\emptyset \right) &\ge& \bP\left(\widehat X\left(\Ha_{K^{z}}\setminus\Ha_K\right)=0\right)\\
&=&\exp\left[-\widehat\Theta(\Ha_{K^{z}}\setminus\Ha_K)\right]\\
&=&\exp\left[-2n(\Phi(K^z)-\Phi(K))\right]\\
&=& e^{-2n\psi(H)}.
\end{eqnarray*}
With the motion invariant measure
\begin{equation}\label{3.0}
\mu = \int_{\Sd} \int_{-\infty}^\infty {\mathbbm 1}\{H(u,\tau)\in\cdot\}\,\D\tau\,\sigma(\D u)
\end{equation}
we obtain
\begin{eqnarray*}
\bE W(Z_K^{(n)})-W(K) &=& \int_\Omega\int_{\Ha\setminus\Ha_K} {\mathbbm 1}\{H\cap Z_K^{(n)}\not=\emptyset\}\,\mu(\D H)\,\D\bP\allowdisplaybreaks\\
&=& \int_{\Ha\setminus\Ha_K} \bP(H\cap Z_K^{(n)}\not=\emptyset)\,\mu(\D H)\allowdisplaybreaks\\
&\ge& \int_{\Ha\setminus\Ha_K}e^{-2n\psi(H)}\,\mu(\D H)\allowdisplaybreaks\\
&\ge& \int_{\Ha\setminus \Ha_K} {\mathbbm 1}\{\psi(H)\le t\}e^{-2nt}\,\mu(\D H)\allowdisplaybreaks\\
&=& e^{-2nt}\mu(\Ha_K^\psi(t)),
\end{eqnarray*}
where $t> 0$ can be any number. The choice $t=1/n$ gives
\begin{equation}\label{3.1}
\bE W(Z_K^{(n)})-W(K) \ge  e^{-2}\mu(\Ha_K^\psi(1/n)).
\end{equation}
This serves as our dual counterpart to relation (\ref{BL3}).

In the next step, we carry over some results from \cite{BL88} to the dual body. Recalling that $K$ has its centroid at the origin, we let $K^\circ=\{x\in\R^d:\langle x,y\rangle\le 1\,\forall y\in K\}$ denote the polar body of $K$. For this body, we consider $K^\circ(t)$ for $t>0$, according to (\ref{BL2}). We choose $\rho,t_0>0$ such that $\rho B^d\subset K^\circ\setminus K^\circ(t)$ for $0<t\le t_0$.

Our aim is to show that, with suitable constants $a_2,a_3$ (depending only on $K,\varphi$) and for $t\ge 0$ sufficiently small,
\begin{equation}\label{BL5}
\mu(\Ha_K^\psi(a_2 t)) \ge a_3 \lambda_d(K^\circ(t)).
\end{equation}
If this is proved, then together with (\ref{3.1}) it yields
$$ \bE W(Z_K^{(n)})-W(K) \ge e^{-2}a_3\lambda_d(K^\circ((a_2 n)^{-1})),$$
if $n$ is sufficiently large. Now (\ref{BL4}), applied to the polar body and with suitable choices, yields the stated lower bound of Theorem \ref{T1.1}.

To prove (\ref{BL5}), we define the map $\eta: \R^d\setminus\{o\}\to \Ha\setminus\Ha_{\{o\}}$  by
$$ \eta(ru) = H(u,r^{-1})\quad\mbox{for } u\in \Sd,\,r> 0.$$
Let $\nu$ denote the image measure of the Lebesgue measure $\lambda_d$ under $\eta$, thus
\begin{equation}\label{3.3}
\nu(A) = \omega_d\int_{ \Sd}\int_0^\infty {\mathbbm 1}\{H(u,\tau)\in A\}\tau^{-(d+1)}\D\tau\,\sigma(\D u)
\end{equation}
for Borel sets $A$ of hyperplanes not passing through $o$.

Let $H(u,\tau)$ be a hyperplane contained in $\Ha_{\rho^{-1}B^d}\setminus \Ha_K$. Since $H(u,\tau)\cap K=\emptyset$, we have $\tau\ge h(K,u)$, which is bounded from below by a positive constant depending only on $K$. Since $H(u,\tau)\cap \rho^{-1}B^d\not=\emptyset$, we have $\tau\le \rho^{-1}$. By (\ref{3.3}) and (\ref{3.0}) there are constants $c_5,c_6>0$, depending only on $d$ and $K$, such that
$$ c_5\nu(A)\le \mu(A) \le c_6\nu(A)\quad \mbox{if } A\subset \Ha_{\rho^{-1}B^d}\setminus \Ha_K.$$

Let $0<t\le t_0$. Our aim is to prove an inclusion $\eta(K^\circ(t))\subset\Ha_K^{\psi}({\rm const}\cdot t)$. Let $x\in K^\circ(t)\setminus\partial K^\circ$. There is a hyperplane $E$ through $x$ that bounds a closed halfspace $E^+$ not containing $o$, such that $\lambda_d(K^\circ\cap E^+)\le t$. If $H:=\eta(x)$ and $y:=\eta^{-1}(E)$, then $y\in H\in \Ha_{\rho^{-1}B^d}\setminus \Ha_K$. We state that the mapping $\eta$ maps the cap $K^\circ\cap E^+$ bijectively onto the set of hyperplanes separating $y$ and $K$. For the proof, let $z\in K^\circ\cap E^+$ and write $z=ru$ with $u\in \Sd$  and $r>0$. If $r(K^\circ,\cdot)$ denotes the radial function of $K^\circ$, we have $r(K^\circ,u)\ge r$ and hence (see \cite[(1.52)]{Sch14}) $h(K,u)=1/r(K^\circ,u)\le 1/r$, thus $\eta(z)=H(u,1/r)\cap{\rm int}\,K=\emptyset$. Further, if $E=H(v,s)$ with $v\in \Sd$ and $s>0$, we have $\langle z,v\rangle \ge s$ and hence
$$ \langle y,u\rangle =\langle \eta^{-1}(E),u\rangle =\left\langle \frac{1}{s}v,u\right\rangle=\left\langle  \frac{1}{s}v, \frac{1}{r}z\right\rangle \ge  \frac{1}{r}.$$
Hence, the hyperplane $\eta(z)= H(u,1/r)$ separates $K$ and $z$. The arguments can be reversed, which completes the proof of the statement. We denote the set of hyperplanes separating $y$ and $K$ by $\Ha^y_K$. Below it is used that each hyperplane from $\Ha^y_K$ meets the ball $\rho^{-1}B^d$. It follows that
$$ W(K^y)-W(K)=\mu(\Ha_K^y) \le c_6\nu(\Ha_K^y) = c_6\lambda_d(K^\circ\cap E^+) \le c_6t.$$

Now we use the assumption (\ref{1.3}). It yields that
$$ \Phi(K^y)-\Phi(K) =\int_{ \Sd}[h(K^y,u)-h(K,u)]\,\varphi(\D u) \le c_7[W(K^y)-W(K)]$$
with $c_7=a_0/2$. Therefore, $\psi(H)\le c_7c_6t$, which gives $H\in \Ha_K^\psi(c_7c_6t)$. Since  $x\in K^\circ(t)\setminus\partial K^\circ$ was arbitrary, this shows that $\eta(K^\circ(t)) \subset \Ha_K^\psi(c_7c_6t)$. Therefore,
$$ \lambda_d(K^\circ(t)) =\nu(\eta(K^\circ(t)))\le \nu(\Ha_K^\psi(c_7c_6t)) \le c_5^{-1}\mu(\Ha_K^\psi(c_7c_6t)).$$
This is the stated inequality (\ref{BL5}) and thus completes the proof of Theorem \ref{T1.1}. \hfill$\Box$

\vspace{2mm}

That the orders in Theorem \ref{T1.1} are optimal, follows from exact asymptotic relations, which will now be proved. They are of independent interest.

We  define
$$ K_1:= K+B^d.$$
This is the outer parallel body of $K$ at distance $1$. The next two lemmas serve to control the error that we make when we restrict ourselves to $K$-cells contained in $K_1$. This requires an assumption on the directional distribution
$\varphi$, which ensures that every convex body $K$ can be approximated by the corresponding $K$-cells.

\begin{lemma}\label{L3.1}
If the support of $\varphi$ is all of $\Sd$, then
\begin{equation}\label{3.4}
\bP\left(Z_K^{(n)}\not\subset K_1\right)\le c_8e^{-a_4n} ,
\end{equation}
with constants $a_4,c_8$ depending only on $K,\varphi$.
\end{lemma}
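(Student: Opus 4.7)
The plan is to reduce to the event $\{Z_K^{(n)} \subset B_R\}$ using Lemma~\ref{L2.1}, and then to show that on this event, the failure $Z_K^{(n)} \not\subset K_1$ forces the Poisson hyperplane process to miss one of finitely many slabs of uniformly positive intensity-measure. A union bound then yields the claim.

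By Lemma~\ref{L2.1} and the derivation of (\ref{2.2}), one has $\bP(Z_K^{(n)} \not\subset B_R) \le 2d e^{-cn}$, so it suffices to estimate $\bP(E)$, where $E := \{Z_K^{(n)} \subset B_R,\ Z_K^{(n)} \not\subset K_1\}$. On $E$, choose $y \in Z_K^{(n)} \setminus K_1$, let $p$ be the nearest point of $K$ to $y$, and set $u_0 := (y - p)/\|y - p\| \in \Sd$. Then $\langle y, u_0\rangle = h(K, u_0) + \|y - p\| \ge h(K, u_0) + 1$ and $\|y\| \le R$.

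The assumption $\supp \varphi = \Sd$, combined with the compactness of $\Sd$ and the lower semicontinuity of $u \mapsto \varphi(C^\circ(u,\delta))$, where $C(u,\delta) := \{v \in \Sd : \langle v, u\rangle > \cos\delta\}$, yields for each $\delta > 0$ a constant $\alpha = \alpha(\delta, \varphi) > 0$ with $\varphi(C(u,\delta)) \ge \alpha$ for every $u \in \Sd$. Choose $\delta > 0$ so small that $(R + R_o)\delta \le 1/2$, and fix a $\delta/2$-net $u_1, \dots, u_N \in \Sd$; here $N$ depends only on $\delta$ and $d$. For the pair $(y, u_0)$ above, pick $i$ with angular distance $\angle(u_0, u_i) \le \delta/2$; then every $v \in C(u_i, \delta/2)$ satisfies $\angle(u_0, v) \le \delta$, and the Lipschitz property $|h(K, v) - h(K, u_0)| \le R_o \|v - u_0\|$ together with $\|v - u_0\| \le \delta$ gives
\begin{equation*}
\langle y, v\rangle - h(K, v) \ge \langle y, u_0\rangle - h(K, u_0) - (R + R_o)\delta \ge \tfrac{1}{2}.
\end{equation*}

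Consequently, every hyperplane in the slab
\begin{equation*}
A_i := \{H(v, \tau) : v \in C(u_i, \delta/2),\ h(K, v) \le \tau \le h(K, v) + \tfrac{1}{2}\}
\end{equation*}
strictly separates $y$ from $K$, so, since $y \in Z_K^{(n)}$, the process must miss $A_i$ entirely. A direct calculation using the evenness of $\varphi$ gives $\widehat{\Theta}(A_i) = n\varphi(C(u_i, \delta/2)) \ge n\alpha$, whence $\bP(\widehat{X}(A_i) = 0) = e^{-\widehat{\Theta}(A_i)} \le e^{-n\alpha}$. A union bound over $i = 1, \dots, N$, combined with the tail estimate from Lemma~\ref{L2.1}, yields $\bP(Z_K^{(n)} \not\subset K_1) \le N e^{-n\alpha} + 2d e^{-cn} \le c_8 e^{-a_4 n}$, as required. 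The main technical point, beyond tuning the covering scale $\delta$ against the $+1$ gap, is the uniform cap lower bound; this is precisely where the hypothesis $\supp \varphi = \Sd$ enters, since otherwise some direction $u_0$ would admit no slab $A_i$ of positive $\widehat{\Theta}$-measure and the argument would collapse.
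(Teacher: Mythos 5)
Your proof is correct, but it is organized differently from the paper's. The paper argues in the ``sufficient direction'': using $\supp\varphi=\Sd$, it fixes finitely many directions $v_1,\dots,v_m$ whose supporting halfspaces already confine $K+\frac12 B^d$, thickens them to disjoint neighborhoods $U_i$ so that \emph{any} choice of normals $u_i\in U_i$ still confines to $K_1$, and observes that if the process hits every slab $A_i=\{H(u,\tau):u\in U_i,\ h(K,u)\le\tau\le h(K,u)+1\}$ then $Z_K^{(n)}\subset K_1$ deterministically; the bound $m e^{-n\varpi}$ follows from the void probabilities, with no need for any a priori size control on $Z_K^{(n)}$. You instead argue contrapositively from a witness point $y\in Z_K^{(n)}\setminus K_1$, which forces you to import Lemma \ref{L2.1} to bound $\|y\|\le R$ (so that the tilt estimate $\langle y,v\rangle-h(K,v)\ge\langle y,u_0\rangle-h(K,u_0)-(R+R_o)\delta$ is uniform), and to replace the paper's ad hoc confining family by a uniform lower bound $\varphi(C(u,\delta))\ge\alpha$ obtained from lower semicontinuity and compactness, followed by a union bound over a $\delta/2$-net. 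Both routes rest on the same core mechanism (finitely many hyperplane slabs of intensity measure $\gtrsim n$, each of which must be missed for the containment to fail), so the gain of your version is mainly conceptual robustness --- the uniform cap bound is the cleanest articulation of where $\supp\varphi=\Sd$ enters --- at the cost of an extra dependency on Lemma \ref{L2.1}. Two cosmetic points: since $y\notin K_1$ gives $\|y-p\|>1$ strictly, your chain actually yields $\langle y,v\rangle>h(K,v)+\tfrac12\ge\tau$, which is what you need to exclude the boundary case $y\in H$; and, as in the paper's proof, the supporting hyperplanes $\tau=h(K,v)$ in your slabs have intensity measure zero, so they may a.s.\ be ignored when concluding that a hyperplane of the process in $A_i$ separates $y$ from $K$.
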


\begin{proof}
Suppose that ${\rm supp}\,\varphi=\Sd$. There are a number $m\in\N$ and  unit vectors $v_1,\dots,v_m\in\Sd$ such that
$$ \bigcap_{i=1}^m H^-(K,v_i)\subset K+\frac{1}{2}B^d,$$
where $H^-(K,v)$ denotes the supporting halfspace of $K$ with outer unit normal vector $v$. We can choose pairwise disjoint neighborhoods $U_i\subset \Sd$ of $v_i$, for $i=1,\dots,m$ (depending only on $K$) such that
$$ \bigcap_{i=1}^m H^-(K,u_i) \subset K_1 \quad \mbox{whenever }u_i\in U_i \mbox{ for }i=1,\dots,m.$$
We set $\varpi:= \min\{\varphi(U_i):i=1,\dots,m\}$. By the assumption on $\varphi$, the number $\varpi$ is positive; further, it depends only on $\varphi,K$. Now we argue similarly as in the proof of Lemma 1 in \cite{Sch19}. The sets of hyperplanes
$$ A_i:= \{H(u,\tau): u\in U_i,\, h(K,u)\le \tau\le h(K,u)+1\},\quad i=1,\dots,m,$$
are pairwise disjoint. If $\widehat X(A_i) >0$ for $i=1,\dots,m$, then $Z_K^{(n)}\subset K_1$. Therefore, observing that $\widehat\Theta(A_i) = n\varphi(U_i)$ for $i=1,\dots,m $, we get
\begin{eqnarray*}
\bP(Z_K^{(n)}\not\subset K_1) &\le& \bP( \widehat X(A_i)=0\mbox{ for at least one } i\in \{1,\dots,m\})\allowdisplaybreaks\\
&=& 1-\prod_{i=1}^{m} (1-\bP( \widehat X(A_i))=0))\allowdisplaybreaks\\
&=& 1-\prod_{i=1}^{m} (1- e^{-n\varphi(U_i)})\allowdisplaybreaks\\
&\le&  1-(1-e^{- n\varpi})^m\allowdisplaybreaks\\
&\le& me^{-n\varpi},
\end{eqnarray*}
which proves the assertion.
\end{proof}

\begin{lemma}\label{L3.2}
Suppose that ${\rm supp}\,\varphi=\Sd$. Then
$$ \bE\left[W(Z_K^{(n)}){\mathbbm 1}\{Z_K^{(n)}\not\subset K_1\}\right] \le c_9e^{-a_5 n}$$
with constants $a_5,c_9$ depending only on $K,\varphi$.
\end{lemma}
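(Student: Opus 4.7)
The plan is to first reduce the mean-width estimate to an estimate on the circumradius. Since $\sigma$ is a probability measure on $\Sd$ and $h(M,u)\le R_o(M)$ for every $u\in\Sd$ whenever $o\in M$, and since $o\in K\subset Z_K^{(n)}$, we have the pointwise bound $W(Z_K^{(n)})\le 2R_o(Z_K^{(n)})$. Hence it suffices to show that $\bE[R_o(Z_K^{(n)})\,\mathbbm{1}\{Z_K^{(n)}\not\subset K_1\}]$ decays exponentially in $n$.

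Next, I would split the expectation according to whether $R_o(Z_K^{(n)})\le R$ or $R_o(Z_K^{(n)})>R$, with $R=bR_o+1$ the constant introduced after Lemma \ref{L2.1}. On the first event, the circumradius is deterministically bounded by $R$, so the contribution is at most
$$R\cdot\bP(Z_K^{(n)}\not\subset K_1)\le Rc_8e^{-a_4n}$$
by Lemma \ref{L3.1}. Note that this is where the assumption $\supp\varphi=\Sd$ enters; the rest of the argument does not use it.

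On the second event, I would simply drop the indicator $\mathbbm{1}\{Z_K^{(n)}\not\subset K_1\}\le 1$ and estimate $\bE[R_o(Z_K^{(n)})\,\mathbbm{1}\{R_o(Z_K^{(n)})>R\}]$ by the layer-cake formula together with the tail bound (\ref{2.1}). This is essentially the same calculation that appeared in the first summand of the proof of Theorem \ref{T2.2} (for $k=1$), and yields a bound of order $e^{-cn}$, where $c$ is the constant from (\ref{2.1}).

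Combining both contributions and multiplying by $2$ to undo the mean-width-to-circumradius reduction produces the claim with $a_5=\min(a_4,c)$ and a suitable constant $c_9$ depending only on $K$ and $\varphi$. I do not expect any real obstacle: the argument is a direct combination of Lemma \ref{L3.1} with the exponential tail (\ref{2.1}), glued together by the elementary inequality $W\le 2R_o$; no new ingredient beyond those already used in the proof of Theorem \ref{T2.2} is required.
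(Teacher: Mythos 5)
Your argument is correct, but it takes a different route from the paper. The paper decouples $W(Z_K^{(n)})$ from the indicator by the Cauchy--Schwarz inequality, bounding $\bE[W(Z_K^{(n)}){\mathbbm 1}\{Z_K^{(n)}\not\subset K_1\}]$ by $(\bE[W(Z_K^{(n)})^2])^{1/2}\,\bP(Z_K^{(n)}\not\subset K_1)^{1/2}$; it then shows that the second moment of $W(Z_K^{(n)})$ is bounded uniformly in $n$ (again via $W\le 2R_o$ and the layer-cake computation from the proof of Theorem \ref{T2.2}) and invokes Lemma \ref{L3.1} for the probability factor. You instead keep the product and split on the size of the circumradius: on $\{R_o(Z_K^{(n)})\le R\}$ the mean width is deterministically bounded and Lemma \ref{L3.1} supplies the exponential factor, while on $\{R_o(Z_K^{(n)})> R\}$ you discard the $K_1$-indicator entirely and use only the tail bound (\ref{2.1}). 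Both proofs rest on exactly the same two ingredients (Lemma \ref{L3.1}, which is where ${\rm supp}\,\varphi=\Sd$ enters, and the circumradius tail from Lemma \ref{L2.1}); your version is marginally more elementary in that it avoids second moments, and it yields the rate $e^{-\min(a_4,c)n}$ rather than the rate $e^{-a_4 n/2}$ coming from Cauchy--Schwarz, though this difference is immaterial for the lemma and for its later use in Theorem \ref{T3.1}.
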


\begin{proof}
The Cauchy--Schwarz inequality yields
$$ \bE\left[W(Z_K^{(n)}){\mathbbm 1}\{Z_K^{(n)}\not\subset K_1\}\right]\le \left(\bE\left[W(Z_K^{(n)})^2\right]\right)^{\frac{1}{2}}\bP\left(Z_K^{(n)}\not\subset K_1\right)^{\frac{1}{2}}.$$
The moments of $W(Z_K^{(n)})$ are bounded, as follows from
$$ \bE\left[W(Z_K^{(n)})^k\right] \le \bE\left[2^kR_o(Z_K^{(n)})^k\right]=2^k \int_0^\infty \bP\left(R_o(Z_K^{(n)})^k>t\right)\D t$$
for $k\in\N$. The last integral can be treated as in the proof of Theorem \ref{T2.2}. Therefore, the assertion follows from Lemmas \ref{L2.1} and \ref{L3.1}.
\end{proof}

The following lemma allows us to carry over to Poisson processes certain asymptotic relations for expectations that hold for finitely many i.i.d. points or hyperplanes. The lemma was first used in a special case by Reitzner \cite[Lemma 1]{Rei05}. We found it necessary to provide more details of the proof.

\begin{lemma}\label{L3.3}
Let $f:\R^+\to\R^+$ be one of the functions

\noindent {\em Case (a):} $f(\lambda)=\lambda^\alpha$ with $0<\alpha<1$,

\noindent {\em Case (b):} $f(\lambda)=\lambda\log^{-a}\lambda$ with $a>0$.

\noindent Let $(g_k)_{k\in \N}$ be a sequence with
$$ \lim_{k\to\infty} f(k) g_k=g<\infty.$$
Let $N_\lambda$ be a real random variable which has a Poisson distribution with parameter $\lambda>0$. Then
\begin{equation}\label{A1}
\lim_{\lambda\to\infty} f(\lambda)\sum_{k=0}^\infty \bP(N_\lambda=k) g_k=g.
\end{equation}
\end{lemma}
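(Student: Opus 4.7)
The plan is to rewrite the sum as the expectation $f(\lambda)\,\bE[g_{N_\lambda}]$ and exploit the sharp Poisson concentration of $N_\lambda$ around $\lambda$, together with the slow variation of $f$, to reduce the problem to the hypothesis $f(k)g_k\to g$. Fix $\varepsilon>0$; by hypothesis there is $K_0$ with $|f(k)g_k-g|<\varepsilon$ for $k\ge K_0$, hence in particular $|g_k|\le M/f(k)$ with $M:=|g|+1$ on that range, and $|g_k|$ is bounded on the finite set $\{0,\dots,K_0-1\}$.

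I would then split the sum into four pieces. First, for the small indices $k<K_0$, the probability $\bP(N_\lambda<K_0)\le K_0\,e^{-\lambda}\lambda^{K_0-1}/(K_0-1)!$ decays faster than any power of $\lambda^{-1}$, while $f(\lambda)$ grows at most linearly, so $f(\lambda)\sum_{k<K_0}\bP(N_\lambda=k)|g_k|\to 0$. For the bulk $B_\lambda:=\{k\ge K_0:|k-\lambda|\le\lambda^{3/4}\}$, a short computation using the explicit form of $f$ gives $\sup_{k\in B_\lambda}|f(\lambda)/f(k)-1|\to 0$ as $\lambda\to\infty$ (in Case (a) the ratio is $(\lambda/k)^\alpha$; in Case (b) it factors as $(\lambda/k)(\log k/\log\lambda)^a$, and both factors tend to $1$ uniformly on $B_\lambda$). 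Writing $f(\lambda)g_k=(f(\lambda)/f(k))(f(k)g_k)$ then yields $f(\lambda)g_k\to g$ uniformly on $B_\lambda$; combined with the Chernoff bound, which gives $\bP(N_\lambda\in B_\lambda)\to 1$, the bulk contribution converges to $g$.

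The two tails are handled by crude bounds. On the upper tail $\{k>\lambda+\lambda^{3/4}\}$, the eventual monotonicity of $f$ yields $f(\lambda)/f(k)\le 1$, hence $f(\lambda)|g_k|\le M$, so the contribution is dominated by $M\,\bP(N_\lambda>\lambda+\lambda^{3/4})\to 0$. On the lower tail $\{K_0\le k<\lambda-\lambda^{3/4}\}$ the crude bound $f(\lambda)/f(k)\le f(\lambda)/f(K_0)$ grows only polynomially (resp.\ slightly slower than linearly), while $\bP(N_\lambda<\lambda-\lambda^{3/4})\le 2\exp(-c\lambda^{1/2})$ decays super-polynomially by Chernoff, so this contribution also vanishes. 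Summing the four pieces and letting $\varepsilon\to 0$ gives (\ref{A1}).

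The main obstacle will be the lower-tail estimate: $f(\lambda)/f(N_\lambda)$ can blow up when $N_\lambda$ is atypically small, so the bulk half-width must be chosen large enough that the Poisson lower-tail probability beats the polynomial growth of $f(\lambda)/f(K_0)$. Any exponent in $(1/2,1)$ suffices; the choice $3/4$ is convenient. Once this quantitative balance is set, the rest of the argument is routine dominated-convergence-style reasoning, and the same scheme covers both Case (a) and Case (b) with only the uniform ratio estimate on $B_\lambda$ differing between them.
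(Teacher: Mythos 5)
Your proof is correct, and its skeleton is the same as the paper's: decompose $f(\lambda)\sum_k\bP(N_\lambda=k)g_k$ into a bulk near $k=\lambda$, where one writes $f(\lambda)g_k=\frac{f(\lambda)}{f(k)}\,f(k)g_k$ and uses that the ratio tends to $1$ uniformly while $f(k)g_k\to g$, plus a tail whose contribution is killed by concentration of $N_\lambda$. The difference lies entirely in the tail. The paper uses only Chebyshev's inequality, $\bP(|N_\lambda-\lambda|>h(\lambda))\le\lambda/h(\lambda)^2$, together with the crude bound $|f(\lambda)g_k-g|\le c\,f(\lambda)$ coming from boundedness of $(g_k)$; this forces a delicate calibration of the bulk half-width $h(\lambda)$ (namely $\lambda f(\lambda)/h(\lambda)^2\to0$, hence $\beta>(1+\alpha)/2$ in Case (a) and $b<a/2$ in Case (b)). You instead invoke a Chernoff bound, which decays super-polynomially and therefore beats the at-most-linear growth of $f(\lambda)$ for any half-width $\lambda^c$ with $c\in(1/2,1)$; the price is a slightly finer four-way split (small indices, bulk, and the two tails, with the eventual monotonicity of $f$ used on the upper tail). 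Both arguments are complete; yours is more robust in that it needs no tuning of the width to the exponent of $f$, and it is in fact the same mechanism the paper is forced to use later (Lemmas \ref{concentration} and \ref{lambdaasymptotics}) when the sequence $p_k$ is unbounded and Chebyshev no longer suffices — so your version would cover that case as well with minimal change.
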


\begin{proof}
We define the function $h:\R^+\to\R^+$ as follows:

\noindent {\em Case} (a): $h(\lambda)=\lambda^\beta$ with $(1+\alpha)/2<\beta<1$,

\noindent {\em Case} (b): $h(\lambda)=\lambda\log^{-b}\lambda$ with $0<b<a/2$.

\noindent Then
\begin{equation}\label{3.5}
\frac{\lambda f(\lambda)}{h(\lambda)^2}\to 0 \quad\mbox{and}\quad \frac{h(\lambda)}{\lambda}\to 0\quad\mbox{as }\lambda\to\infty.
\end{equation}
Define
\begin{eqnarray*}
A(\lambda) &:=& \sum_{|k-\lambda|>h(\lambda)} \bP(N_\lambda=k)\left[f(\lambda) g_k-g\right],\\
B(\lambda) &:=& \sum_{|k-\lambda|\le h(\lambda)} \bP(N_\lambda=k)\left[f(\lambda) g_k-g\right]
\end{eqnarray*}
($k\in\N_0$ in each case), so that
$$ A(\lambda)+B(\lambda)=\left(\sum_{k=0}^\infty \bP(N_\lambda=k)f(\lambda) g_k\right)-g.$$

Concerning the first sum, we note that the sequence $(g_k)_{k\in \N}$ is bounded, hence there is a constant $c_{10}$ with
$$|f(\lambda) g_k-g|\le f(\lambda) c_{10} \quad\mbox{for all $k$},$$
for sufficiently large $\lambda$. Using this and Tschebyscheff's inequality, we get
\begin{eqnarray*}
|A(\lambda)| &\le& \sum_{|k-\lambda|>h(\lambda)} \bP(N_\lambda=k)f(\lambda) c_{10}\\
&=& \bP(|N_\lambda-\lambda|>h(\lambda))f(\lambda) c_{10}\\
&\le& \frac{\lambda}{h(\lambda)^2}f(\lambda) c_{10}.
\end{eqnarray*}
By (\ref{3.5}), this tends to zero as $\lambda\to\infty$.

For the second sum, we obtain
\begin{eqnarray*}
|B(\lambda)| &\le& \sum_{|k-\lambda|\le h(\lambda)} \bP(N_\lambda =k)\Bigg|\frac{f(\lambda)}{f(k)}f(k)g_k-g\Bigg|\\
&\le& \sum_{|k-\lambda|\le h(\lambda)} \bP(N_\lambda =k)\left(\Bigg|\frac{f(\lambda)}{f(k)}-1\Bigg| |f(k) g_k|+|f(k) g_k-g|\right).
\end{eqnarray*}

Let $\varepsilon>0$ be given. There is some number $k_0$ with $|f(k) g_k-g|<\varepsilon$ for $k\ge k_0$. Further, there is a number $\lambda_0$ with $\lambda-h(\lambda) \ge k_0$ for $\lambda\ge \lambda_0$. Hence, for $\lambda \ge \lambda_0$ we have
\begin{eqnarray*}
|B(\lambda)| &\le& \sum_{|k-\lambda|\le h(\lambda)} \bP(N_\lambda =k)\left(\Bigg|\frac{f(\lambda)}{f(k)}-1\Bigg| |f(k) g_k|+\varepsilon\right)\\
&\le& \varepsilon + c_{11}\sum_{|k-\lambda|\le h(\lambda)} \bP(N_\lambda =k)\Bigg|\frac{f(\lambda)}{f(k)}-1\Bigg|
\end{eqnarray*}
with a constant $c_{11}$, since the sequence $(f(k) g_k)_{k\in\N}$ is bounded.

Suppose that $|k-\lambda|\le h(\lambda)$. Then $0<\lambda-h(\lambda) \le k\le \lambda+h(\lambda)$ and hence
\begin{equation}\label{3.6}
\Big|\frac{\lambda}{k}-1\Big| \le \frac{h(\lambda)}{k} \le \frac{h(\lambda)}{\lambda-h(\lambda)} =\frac{1}{\frac{\lambda}{h(\lambda)}-1} \to 0\quad\mbox{as }\lambda\to\infty,
\end{equation}
by (\ref{3.5}). The function $f$ has the following property:
$$ \forall\varepsilon>0\;\exists\delta>0\;\exists x_0>0\;\forall x\ge x_0: |\gamma-1|<\delta\Rightarrow \Bigg|\frac{f(\gamma x)}{f(x)}-1\Bigg|<\varepsilon.$$
From this and (\ref{3.6}) it follows that $B(\lambda)\to 0$ as $\lambda\to\infty$. Both limit results together yield the assertion.
\end{proof}

Now we prove an exact asymptotic relation, by using a result of B\"or\"oczky, Fodor and Hug \cite[Thm. 5.2]{BFH10}.

In the following, we shall assume that the directional distribution $\varphi$ of $\widehat X$ has a positive, continuous density $q$ with respect to spherical Lebesgue measure. We need the following functional depending on the convex body $K$ and on $q$:
$$ F(K,q) = 2c_d\omega_d^{-\frac{d-1}{d+1}} \int_{\partial K} q(\sigma_K(x))^{-\frac{2}{d+1}}\kappa(x)^\frac{d}{d+1}\,{\mathscr H}^{d-1}(\D x),$$
where ${\mathscr H}^{d-1}$ denotes the $(d-1)$-dimensional Hausdorff measure. The constant $c_d$ is defined by
$$ c_d=\frac{(d^2+d+2)(d^2+1)}{2(d+3)\cdot(d+1)!}\Gamma\left(\frac{d^2+1}{d+1}\right)\left(\frac{d+1}{\kappa_{d-1}}\right)^{\frac{2}{d+1}},$$
and $\sigma_K(x)$ is the ${\mathscr H}^{d-1}$-almost everywhere unique outer unit normal vector of $K$ at the point $x\in\partial K$. The Gauss--Kronecker curvature $\kappa$ of $\partial K$ exists ${\mathscr H}^{d-1}$-almost everywhere on $\partial K$. We have $F(K,q)>0$ if $K$ is of class $C^2$.

\begin{theorem}\label{T3.1}
Suppose that the directional distribution $\varphi$ of $\widehat X$ has a positive continuous density $q$. Then
$$ \lim_{n\to\infty} n^{\frac{2}{d+1}}\left[\bE W(Z_K^{(n)})-W(K)\right] = 2^{-\frac{2}{d+1}}F(K,q).$$
\end{theorem}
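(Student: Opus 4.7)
The plan is to prove Theorem \ref{T3.1} by reducing the Poisson model to a binomial (fixed-count) model via de-Poissonization, invoking the exact asymptotic of B\"or\"oczky, Fodor, and Hug \cite{BFH10} for that binomial model, and then re-Poissonizing with Lemma \ref{L3.3}.

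Since $q$ is positive and continuous, $\supp\varphi=\Sd$, and Lemmas \ref{L3.1}--\ref{L3.2} show that the contribution of $\{Z_K^{(n)}\not\subset K_1\}$ to $\bE W(Z_K^{(n)})$ is $O(e^{-a_5 n})$, which is negligible at the scale $n^{-2/(d+1)}$. On the complementary event, the $K$-cell $Z_K^{(n)}$ depends only on those hyperplanes of $\widehat X$ that meet $K_1\setminus K$: any hyperplane missing $K_1$ bounds a halfspace containing $Z_K^{(n)}$ and is therefore redundant. Let $N_n:=\widehat X(\Ha_{K_1}\setminus\Ha_K)$, a Poisson random variable with mean $\lambda_n=2n[\Phi(K_1)-\Phi(K)]$. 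Conditional on $\{N_n=k\}$, these $k$ hyperplanes are i.i.d.~with the normalized distribution $\widehat\Theta(\,\cdot\cap(\Ha_{K_1}\setminus\Ha_K))/\lambda_n$, so writing $g_k$ for the expected mean width difference in the binomial model with $k$ such i.i.d.~hyperplanes yields
$$\bE W(Z_K^{(n)})-W(K)=\sum_{k=0}^\infty \bP(N_n=k)\,g_k+O(e^{-cn}).$$

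Theorem 5.2 of \cite{BFH10} supplies an asymptotic of the form $k^{2/(d+1)}g_k\to g^*$, with $g^*$ an explicit constant built from the integral that defines $F(K,q)$ together with the normalization $\lambda_n/n$ implicit in the i.i.d.~distribution. Lemma \ref{L3.3}, case (a) with $f(\lambda)=\lambda^{2/(d+1)}$ and $\alpha=2/(d+1)\in(0,1)$, then transfers this pointwise convergence to the Poisson parameter $\lambda_n\to\infty$:
$$\lim_{n\to\infty}\lambda_n^{2/(d+1)}\sum_{k=0}^\infty \bP(N_n=k)\,g_k=g^*.$$
Substituting $\lambda_n=2n[\Phi(K_1)-\Phi(K)]$, the factor $[\Phi(K_1)-\Phi(K)]^{2/(d+1)}$ cancels against a matching factor inside $g^*$, while the factor $2^{2/(d+1)}$ coming from the identity $\bE\widehat X(\Ha_M)=2n\Phi(M)$ produces the prefactor $2^{-2/(d+1)}$ in the final limit.

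The main obstacle is the precise bookkeeping in the last step: translating the normalization conventions of \cite{BFH10} for the binomial dual model into the intensity conventions used here for $\widehat X$, so that the auxiliary factor $[\Phi(K_1)-\Phi(K)]^{2/(d+1)}$ cancels exactly and leaves the clean expression $2^{-2/(d+1)}F(K,q)$. Once this matching is in place, Lemma \ref{L3.3} requires only the pointwise convergence supplied by \cite{BFH10}, so the re-Poissonization itself proceeds automatically without any uniformity hypothesis on the $g_k$.
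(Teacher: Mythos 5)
Your proposal follows essentially the same route as the paper's proof: restrict $\widehat X$ to $\Ha_{K_1}\setminus\Ha_K$ (the complementary event being exponentially negligible by Lemmas \ref{L3.1} and \ref{L3.2}), condition on the number of such hyperplanes to reduce to the i.i.d.\ circumscribed model of \cite{BFH10}, and re-Poissonize via Lemma \ref{L3.3}, Case (a). The bookkeeping you flag as the main obstacle is in fact immediate: since $h(K_1,\cdot)=h(K,\cdot)+1$ and $\varphi$ is a probability measure, one has $\Phi(K_1)-\Phi(K)=1$, so $\lambda_n=2n$ exactly and the prefactor $2^{-\frac{2}{d+1}}$ drops out precisely as you describe.
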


\begin{proof}
First we observe that
$$ W(Z_K^{(n)}) = W(Z_K^{(n)}\cap K_1) +\left(W(Z_K^{(n)}) - W(Z_K^{(n)}\cap K_1)\right){\mathbbm 1}\{Z_K^{(n)}\not\subset K_1\}.$$
In the following, we abbreviate
$$ \Ha_{K_1}^*:= \Ha_{K_1} \setminus \Ha_K \quad \mbox{and} \quad X_K^{(n)}:= \widehat X\fed \Ha_{K_1}^*.$$
For any $k\in\N_0$, we get
\begin{eqnarray*}
&& \bE\left[W(Z_K^{(n)}){\mathbbm 1}\{X_K^{(n)}(K_1)=k\}\right]\\
&&= \bE\left[W(Z_K^{(n)}\cap K_1){\mathbbm 1}\{X_K^{(n)}(K_1)=k\}\right] + O(e^{-a_5 n})
\end{eqnarray*}
by Lemma \ref{L3.2}, where the constant involved in $O$ is independent of $k$. Thus, we obtain the expansion
\begin{eqnarray*}
&& \bE W(Z_K^{(n)})\\
&& = \sum_{k=0}^\infty \bP\left(X_K^{(n)}(K_1)=k\right)\bE\left[W(Z_K^{(n)}\cap K_1)\mid X_K^{(n)}(K_1)=k\right] + O(e^{-a_5 n}).
\end{eqnarray*}
We define a Borel measure $\mu_K$ on $\Ha$ by
\begin{equation}\label{3.7}
\mu_K:= \int_{\Sd}\int_{h(K,u)}^{h(K,u)+1} {\mathbbm 1}\{H(u,t)\in\cdot\}\,\D t\,\varphi(\D u).
\end{equation}
In the following, $\Theta_n$ denotes the intensity measure of $X_K^{(n)}$; then $\Theta_n=2n\mu_K$. The measure $\mu_K$ is a probability measure on $\Ha$ which is concentrated on $\Ha_{K_1}^*$, and $\Theta_n(\Ha_{K_1}^*)=2n$. Thus we get
\begin{eqnarray*}
&& \bE W(Z_K^{(n)})+ O(e^{-a_5 n})\\
&& = \sum_{k=0}^\infty e^{-\Theta_n(\Ha_{K_1}^*)}\frac{\Theta_n(\Ha_{K_1}^*)^k}{k!} \int_{(\Ha_{K_1}^*)^k} W\left(\bigcap_{i=1}^k H^-_i\cap K_1\right)\mu_K^k(\D(H_1,\dots,H_k)).
\end{eqnarray*}
Here for $H\in\Ha_{K_1}^*$ we denote by $H^-$ the closed halfspace bounded by $H$ that contains $K$.

The random variable $X_K^{(n)}(K_1)$ has a Poisson distribution with parameter $2n$. In the following, we denote such a random variable by $N_{2n}$. Denoting by $h_1,\dots,h_k$ i.i.d. random hyperplanes with distribution $\mu_K$, it follows from the preceding argumentation that
$$ \bE W(Z_K^{(n)})-W(K)=\sum_{k=0}^\infty \bP(N_{2n}=k)g_k+O(e^{-a_5 n})$$
with
$$ g_k:= \bE\left[W\left(\bigcap_{i=1}^k h_i^-\cap K_1\right)\right]-W(K).$$
It was shown in \cite[Thm. 5.2]{BFH10} that
$$ \lim_{k\to\infty} k^{\frac{2}{d+1}}g_k=F(K,q).$$
Now Lemma \ref{L3.3} (Case (a) and with $\lambda= 2n$) gives the assertion.
\end{proof}

In a similar way, we can derive from Theorem 1.3 in B\"or\"oczky and Schneider \cite{BS10} the following theorem. Here Case (b) of Lemma \ref{L3.3} is needed.

\begin{theorem}\label{T3.2}
Suppose that $\widehat X$ is isotropic. If $K$ is a simplicial polytope with $r$ facets, then
$$ \lim_{n\to\infty}  \frac{n}{\log^{d-1} n} \left[\bE W(Z_K^{(n)})-W(K)\right] = rd\left(\frac{\log 2}{d+1}\right)^{d-1}.$$
\end{theorem}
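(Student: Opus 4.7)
The plan is to mimic the proof of Theorem \ref{T3.1} line by line, replacing the asymptotic input from \cite{BFH10} with the corresponding relation from \cite{BS10}, and Case (a) of Lemma \ref{L3.3} with Case (b).

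Since $\widehat X$ is isotropic we have $\varphi=\sigma$, so in particular $\supp\varphi=\Sd$ and both Lemmas \ref{L3.1} and \ref{L3.2} apply. Let $\mu_K$ be the probability measure on $\Ha_{K_1}^*$ defined in (\ref{3.7}) and let $X_K^{(n)}:=\widehat X\fed\Ha_{K_1}^*$; then its intensity measure is $\Theta_n=2n\mu_K$ and $X_K^{(n)}(K_1)$ is Poisson with parameter $2n$. Writing out the sum over the conditional events $\{X_K^{(n)}(K_1)=k\}$, applying Lemma \ref{L3.2} to absorb the event $\{Z_K^{(n)}\not\subset K_1\}$ into an $O(e^{-a_5 n})$ remainder, and noting that conditionally on $X_K^{(n)}(K_1)=k$ the hyperplanes are i.i.d.\ with distribution $\mu_K$, one obtains, exactly as in the proof of Theorem \ref{T3.1}, the representation
$$ \bE W(Z_K^{(n)})-W(K) \;=\; \sum_{k=0}^\infty \bP(N_{2n}=k)\,g_k \;+\; O(e^{-a_5 n}),$$
where $N_{2n}$ is Poisson with parameter $2n$ and
$$ g_k \;:=\; \bE\!\left[W\!\left(\bigcap_{i=1}^k h_i^-\cap K_1\right)\right]-W(K),$$
the $h_i$ being i.i.d.\ $\mu_K$-distributed.

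Next, I would invoke \cite[Thm.~1.3]{BS10} applied to the i.i.d.\ random polytope generated by $\mu_K$ under the isotropy assumption. For a simplicial polytope with $r$ facets this gives
$$ \lim_{k\to\infty}\frac{k}{\log^{d-1}k}\,g_k \;=\; 2rd\left(\frac{\log 2}{d+1}\right)^{d-1}.$$
Finally, apply Case (b) of Lemma \ref{L3.3} with $f(\lambda)=\lambda\log^{-(d-1)}\lambda$, $a=d-1$, and $\lambda=2n$, so that
$$ \lim_{n\to\infty} 2n\log^{-(d-1)}(2n)\sum_{k=0}^\infty \bP(N_{2n}=k)\,g_k \;=\; 2rd\left(\frac{\log 2}{d+1}\right)^{d-1}.$$
Since $\log(2n)/\log n\to 1$ as $n\to\infty$, dividing by $2$ and absorbing the exponentially small error yields the stated limit.

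The only nontrivial step is the middle one: identifying the precise constant in the asymptotic for $g_k$ from \cite{BS10}. The random polytope model in \cite{BS10} has to be matched carefully with the measure $\mu_K$ appearing here (both are supported on hyperplanes separating a cap from $K$, but the parametrizations differ), so one must verify that the constant comes out to $2rd(\log 2/(d+1))^{d-1}$; the extra factor $2$ is exactly what compensates for the Poisson parameter $2n$ (rather than $n$) in the rescaling and produces the formula in the statement. Beyond this constant-tracking, the argument is a verbatim analogue of the proof of Theorem \ref{T3.1}.
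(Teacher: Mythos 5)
Your argument is essentially the paper's own: the authors prove Theorem \ref{T3.2} only by remarking that it follows ``in a similar way'' as Theorem \ref{T3.1} from \cite[Thm.~1.3]{BS10} together with Case (b) of Lemma \ref{L3.3}, which is precisely your route, and your bookkeeping (Poisson parameter $2n$, the factor $\tfrac12$ from $f(2n)/f(n)\to\tfrac12\cdot\log^{d-1}(2n)/\log^{d-1}n\to\tfrac12$) is consistent with the stated constant. The one step you flag but do not verify --- that \cite[Thm.~1.3]{BS10}, for i.i.d.\ hyperplanes with distribution $\mu_K$, gives $\lim_{k\to\infty}(k/\log^{d-1}k)\,g_k=2rd\left(\log 2/(d+1)\right)^{d-1}$ --- is exactly the external input the paper also takes on faith, so there is no gap relative to the paper's own proof.
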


\section{The facet number}\label{sec4}

The hitting number difference is related to the facet number of the $K$-cell. Denoting the number of facets of a polytope $P$ by $f_{d-1}(P)$, we have
\begin{equation}\label{4.1}
\bE f_{d-1}(Z_K^{(n)}) = 2n\,\bE[\Phi(Z_K^{(n)})-\Phi(K)].
\end{equation}
For the proof, we note that for a hyperplane $H\in\Ha\setminus\Ha_K$ we have
$${\mathbbm 1}\{Z_K(\widehat X)\cap H\not=\emptyset\} = {\mathbbm 1}\{Z_K(\widehat X\cup\{H\})\cap H\not=\emptyset\},$$
where $Z_K({\sf H})$ denotes the $K$-cell generated by the hyperplane system ${\sf H}$. Therefore, the Slivnyak--Mecke formula (Cor. 3.2.3 in \cite{SW08}) yields
\begin{eqnarray*}
\bE f_{d-1}(Z_K^{(n)}) &=& \bE \sum_{H\in\widehat X} {\mathbbm 1}\{Z_K^{(n)}\cap H\not=\emptyset\}{\mathbbm 1}\{H\cap K=\emptyset\}\\
&=& \int_\Ha \bE{\mathbbm 1}\{Z_K^{(n)}\cap H\not=\emptyset\}{\mathbbm 1}\{H\cap K=\emptyset\}\,\widehat\Theta(\D H)\\
&=& 2n \int_{\Sd} \bE \int_{h(K,u)}^\infty {\mathbbm 1}\{Z_K^{(n)}\cap H(u,\tau)\not=\emptyset\}\,\D\tau\,\varphi(\D u)\\
&=& 2n \int_{\Sd} \bE[h(Z_K^{(n)},u)-h(K,u)]\,\varphi(\D u)\\
&=& 2n\bE[\Phi(Z_K^{(n)})-\Phi(K)].
\end{eqnarray*}
Formula (\ref{4.1}) can be seen as a dual and Poisson counterpart to the Efron  identity (formula (8.12) in \cite{SW08}). The following lemma is, in a similar way, a counterpart to Lemma 5 in Brunel \cite{Bru17}.

\begin{lemma}\label{L4.1}
Let $k\in\N$. Abbreviating $f_{d-1}(Z_K^{(n)})=:f$, we have
$$ \bE\left[f(f-1)\cdots(f-k+1)\right] \le \frac{(2n)^k}{k!}\,\bE\left[\left(\Phi(Z_K^{(n)})-\Phi(K)\right)^k\right].$$
\end{lemma}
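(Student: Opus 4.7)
My proof plan uses the multivariate Slivnyak--Mecke formula combined with a pointwise monotonicity bound on the facet indicator, in the dual Poisson analogue of Brunel's Lemma~5.

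First, as in the derivation of (4.1), each facet of $Z_K^{(n)}$ is bounded by a unique hyperplane $H\in\widehat X$ with $H\cap K=\emptyset$ and $Z_K^{(n)}\cap H\neq\emptyset$. Consequently
\[
f(f-1)\cdots(f-k+1) = \sum_{(H_1,\ldots,H_k)\in\widehat X^k_{\neq}} \prod_{i=1}^{k}\mathbbm{1}\{H_i\cap K=\emptyset,\; Z_K^{(n)}\cap H_i\neq\emptyset\},
\]
with the sum over ordered $k$-tuples of pairwise distinct hyperplanes of $\widehat X$; equivalently, this equals $k!$ times the corresponding sum over unordered $k$-subsets.

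Applying the multivariate Slivnyak--Mecke formula (\cite[Cor.~3.2.4]{SW08}) in its unordered-subset form, together with the $k$-probe extension of the identity $Z_K(\widehat X)\cap H\neq\emptyset\Leftrightarrow Z_K(\widehat X\cup\{H\})\cap H\neq\emptyset$ used for (4.1), gives a $\tfrac{1}{k!}$-weighted integral of $\bE\prod_{i=1}^{k}\mathbbm{1}\{H_i\cap K=\emptyset,\; Z_K(\widehat X\cup\{H_1,\ldots,H_k\})\cap H_i\neq\emptyset\}$ against $\widehat\Theta^{\otimes k}$. The factor $\tfrac{1}{k!}$ is the combinatorial prefactor of the unordered Slivnyak--Mecke formula, reflecting that each unordered $k$-subset of facets is realized as $k!$ ordered $k$-tuples.

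The key pointwise estimate is the monotonicity bound
\[
\mathbbm{1}\{Z_K(\widehat X\cup\{H_1,\ldots,H_k\})\cap H_i\neq\emptyset\}\le\mathbbm{1}\{Z_K^{(n)}\cap H_i\neq\emptyset\},
\]
valid because $Z_K(\widehat X\cup\{H_1,\ldots,H_k\})\subseteq Z_K^{(n)}$. The dominating integrand depends on $Z_K^{(n)}$ and on each $H_i$ separately, so by Fubini it factorizes into the $k$-fold product of $\int\mathbbm{1}\{H\cap K=\emptyset,\, Z_K^{(n)}\cap H\neq\emptyset\}\,\widehat\Theta(\D H)=\widehat\Theta(\Ha_{Z_K^{(n)}}\setminus\Ha_K)=2n(\Phi(Z_K^{(n)})-\Phi(K))$, a direct consequence of the explicit form of $\widehat\Theta$. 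Combined with the $\tfrac{1}{k!}$ prefactor and taking expectation, this yields the stated bound.

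The main subtlety will be the correct bookkeeping of the $\tfrac{1}{k!}$ factor via the unordered-subset form of Slivnyak--Mecke, mirroring the combinatorial heart of Brunel's treatment in the primal setting of \cite{Bru17}; the monotonicity step and the evaluation of the Poisson intensity are both routine.
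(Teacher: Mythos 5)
Your overall strategy is exactly the paper's: write the falling factorial as a sum over ordered $k$-tuples of distinct hyperplanes of $\widehat X$, apply the multivariate Slivnyak--Mecke formula, dominate the facet indicator by the hitting indicator via $Z_K(\widehat X\cup\{H_1,\dots,H_k\})\subseteq Z_K^{(n)}$, and factorize the resulting integrand into the $k$-th power of $\widehat\Theta(\Ha_{Z_K^{(n)}}\setminus\Ha_K)=2n(\Phi(Z_K^{(n)})-\Phi(K))$. These steps are all sound. (One small caveat: your opening identity should be an inequality ``$\le$'' rather than ``$=$'', since a hyperplane of $\widehat X$ missing $K$ and meeting $Z_K^{(n)}$ need not carry a facet --- that holds only almost surely --- but ``$\le$'' is all you need and it holds surely.)

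The genuine gap is the $\tfrac{1}{k!}$ bookkeeping, which you yourself single out as the main subtlety and which, as described, does not close. Your first display already identifies $f(f-1)\cdots(f-k+1)$ with the \emph{ordered} sum; passing to the unordered sum costs a factor $k!$, and the unordered form of the Slivnyak--Mecke formula then reintroduces exactly $\tfrac{1}{k!}$, so the two cancel and you are left with the full integral against $\widehat\Theta^{\otimes k}$ with no prefactor. What your argument actually proves is
$$ \bE\left[f(f-1)\cdots(f-k+1)\right]\le(2n)^k\,\bE\left[\left(\Phi(Z_K^{(n)})-\Phi(K)\right)^k\right], $$
equivalently $\bE\binom{f}{k}\le\frac{(2n)^k}{k!}\,\bE[(\Phi(Z_K^{(n)})-\Phi(K))^k]$, which is weaker than the stated lemma by a factor $k!$: the $\tfrac{1}{k!}$ in the statement belongs with $\binom{f}{k}$ on the left-hand side, not with the falling factorial, and no choice of ordered versus unordered summation recovers it. For what it is worth, the paper's own proof contains the same slip (it asserts that the $\tfrac{1}{k!}$-weighted sum over ordered distinct tuples equals the number of ordered $k$-tuples of facets, i.e.\ the falling factorial, when it in fact equals $\binom{f}{k}$), and the weaker bound above is entirely sufficient for the sequel, since inequality (\ref{4.5}) and Theorem \ref{T4.1} only require the estimate up to a constant depending on $d$ and $k$.
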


\begin{proof}
For a $d$-polytope $P$, we denote by ${\sf F}(P)$ the set of its facet hyperplanes, that is, the affine hulls of its facets. For $k\in\N$,
$$ \frac{1}{k!}\sum_{(H_1,\dots,H_k)\in\widehat X^k_{\not=}} \prod_{i=1}^k {\mathbbm 1}\left\{H_i\in{\sf F}(Z_K^{(n)})\right\}$$
is the number of ordered $k$-tuples of facets of $Z_K^{(n)}$ and hence is equal to $f(f-1)\cdots(f-k+1)$. Therefore, the Slivnyak--Mecke formula  yields
\begin{eqnarray*}
&& k! \bE [f(f-1)\cdots(f-k+1)]\\
&& = \bE \sum_{(H_1,\dots,H_k)\in\widehat X^k_{\not=}} \prod_{i=1}^k {\mathbbm 1}\left\{H_i\in{\sf F}(Z_K^{(n)})\right\} {\mathbbm 1}\{H_i\cap K=\emptyset\}\allowdisplaybreaks\\
&& = \int_{\Ha^k} \bE \prod_{i=1}^k {\mathbbm 1}\left\{ H_i\in{\sf F}(Z_K(\widehat X\cup\{H_1,\dots,H_k\}))\right\}{\mathbbm 1}\{H_i\cap K=\emptyset\}\,\widehat\Theta^k(\D(H_1,\dots,H_k))\allowdisplaybreaks\\
&& \le \int_{\Ha^k} \bE \prod_{i=1}^k {\mathbbm 1}\{H_i\cap Z_K^{(n)}\not=\emptyset\}{\mathbbm 1}\{H_i\cap K=\emptyset\}\,\widehat\Theta^k(\D(H_1,\dots,H_k))\allowdisplaybreaks\\
&& =n^k\, \bE \int_{(\Sd)^k} \int_{\R^k} \prod_{i=1}^k{\mathbbm 1}\{H(u_i,\tau_i)\cap Z_K^{(n)}\not=\emptyset\}{\mathbbm 1}\{H(u_i,\tau_i)\cap K=\emptyset\}\allowdisplaybreaks\\
&& \hspace{4mm}\times\;\D(\tau_1,\dots,\tau_k)\,\varphi^k(\D(u_1,\dots,u_k))\allowdisplaybreaks\\
&& = n^k\, \bE   \prod_{i=1}^k 2 \int_{(\Sd)^k}[h(Z_K^{(n)},u_i)-h(K,u_i)]\,\varphi(\D u_i)\allowdisplaybreaks\\
&& = (2n)^k \,\bE \left[\left(\Phi(Z_K^{(n)})-\Phi(K)\right)^k\right]
\end{eqnarray*}
and thus the assertion.
\end{proof}

Since the polynomial $x^k$ is a linear combination of the polynomials $x(x-1)\cdots(x-j+1)$, $0\le j\le k$, we obtain the inequality
\begin{equation}\label{4.5}
\bE \left[f_{d-1}(Z_K^{(n)})^k\right] \le C_k(2n)^k\, \bE\left[\left(\Phi(Z_K^{(n)})-\Phi(K)\right)^k\right],
\end{equation}
where $C_k$ is a constant depending only on $d$ and $k$.

Together with Theorem \ref{T2.2}, this yields the following.

\begin{theorem}\label{T4.1}
For $k\in \N$,
\begin{equation}\label{4.1a}
\bE \left[f_{d-1}(Z_K^{(n)})^k\right] \le c(k)n^{\frac{k(d-1)}{d+1}}
\end{equation}
with a constant $c(k)$ independent of $n$.
\end{theorem}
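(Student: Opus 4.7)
The plan is to combine the two preparatory results (\ref{4.5}) and Theorem \ref{T2.2}, which together leave essentially no work to do; the substantive content has already been bundled into those statements. So I will present this as a short chaining argument rather than a lengthy proof.

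First I would write down (\ref{4.5}), which expresses the $k$-th moment of the facet count in terms of the $k$-th moment of the hitting functional difference, up to a factor $C_k(2n)^k$ with $C_k$ depending only on $d$ and $k$. This inequality is justified in the excerpt by expanding $x^k$ as a linear combination of the falling factorials $x(x-1)\cdots(x-j+1)$ for $0\le j\le k$ and applying Lemma \ref{L4.1} to each term; no further combinatorial work is needed here.

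Second I would invoke Theorem \ref{T2.2}, which gives
$$\bE\left[\left(\Phi(Z_K^{(n)})-\Phi(K)\right)^k\right]\le c_2 n^{-\frac{2k}{d+1}}$$
with $c_2$ independent of $n$. Substituting this bound into (\ref{4.5}) yields
$$\bE\left[f_{d-1}(Z_K^{(n)})^k\right]\le C_k 2^k c_2\, n^{k-\frac{2k}{d+1}} = C_k 2^k c_2\, n^{\frac{k(d-1)}{d+1}},$$
since $k-\frac{2k}{d+1}=\frac{k(d+1)-2k}{d+1}=\frac{k(d-1)}{d+1}$. Setting $c(k):=C_k 2^k c_2$ gives the claimed bound, and this constant depends only on $d$, $k$, $K$ and $\varphi$ but not on $n$.

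There is no real obstacle to overcome here, since all the analytic difficulty (Bronshtein's net, the deviation estimate in Theorem \ref{T2.1}, and the radius tail estimate Lemma \ref{L2.1}) has already been absorbed into Theorem \ref{T2.2}, and the Slivnyak--Mecke-based facet-moment bound Lemma \ref{L4.1} together with the standard Stirling-number expansion delivers (\ref{4.5}). The only slightly delicate point worth double-checking is the exponent arithmetic and the fact that the constant $c(k)$ genuinely does not depend on $n$; both are immediate. Thus the proof reduces to one display of algebraic manipulation.
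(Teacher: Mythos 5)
Your argument is exactly the paper's: the authors state inequality (\ref{4.5}) and then write ``Together with Theorem \ref{T2.2}, this yields the following,'' which is precisely your chaining of the two bounds, and your exponent arithmetic $k-\frac{2k}{d+1}=\frac{k(d-1)}{d+1}$ is correct. Nothing is missing.
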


Now we prove an exact asymptotic relation, which shows that the order in (\ref{4.1a}) for $k=1$ is optimal. We derive this from another result of B\"or\"oczky, Fodor and  Hug \cite{BFH10}. For $k\in\N_0$, let $h_1,\dots,h_k$ be i.i.d. random hyperplanes with distribution $\mu_K$ given by (\ref{3.7}). Define
$$ p_k:= \bE\left[f_{d-1}\left(\bigcap_{i=1}^k h_i^-\right){\mathbbm 1}\left\{\bigcap_{i=1}^k h_i^-\subset K_1\right\}\right],$$
where $h_i^-$ is the closed halfspace bounded by $h_i$ that contains $K$. Suppose that $\varphi$ has a positive, continuous density $q$, and define
$$ G(K,q):= c_d(\omega_d)^{-\frac{d-1}{d+1}}\int_{\partial K} q(\sigma_K(x))^{\frac{d-1}{d+1}}\kappa(x)^{\frac{d}{d+1}}{\mathscr H}^{d-1}(\D x).$$
It was shown in \cite[Thm. 5.3]{BFH10} that
$$ \lim_{k\to\infty} k^{-\frac{d-1}{d+1}} p_k = p:= G(K,q).$$
In fact, the formulation in \cite{BFH10} is slightly different, but it is explained there on page 502 that the indicator ${\mathbbm 1}\{\cdot\subset K_1\}$ may be inserted without changing the limit relation. Again we have $G(K,q)>0$ if $K$ is of class $C^2$.

We provide two more auxiliary results. They are motivated by a remark of Calka and Schreiber \cite[p. 48]{CS06} (in a dual situation). We found it necessary to give details of the proof.

\begin{lemma}\label{concentration}
Let $N_\lambda$ have a Poisson distribution with parameter $\lambda>0$. Let $\frac{1}{2}<b<a<1$. There are constants $c_{12}, c_{13}>0$ such that
$$ \bP\left(N_\lambda\ge \lambda-1+\lambda^a\;\mbox{or}\; N_\lambda\le \lambda-1-\lambda^a\right) \le c_{12}\,\exp(-c_{13}\lambda^{2b-1}).$$
\end{lemma}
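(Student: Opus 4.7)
The plan is to derive both tails from the exponential Chernoff bound for the Poisson distribution and then absorb the sharper rate produced by Chernoff into the weaker one claimed in the lemma.

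Recall that $\bE e^{sN_\lambda}=\exp[\lambda(e^s-1)]$ for every $s\in\R$. Markov's inequality applied to $e^{sN_\lambda}$ with $s>0$ gives
\[
\bP(N_\lambda\ge\lambda-1+\lambda^a) \le \exp\bigl[-s(\lambda-1+\lambda^a)+\lambda(e^s-1)\bigr],
\]
and Markov's inequality applied to $e^{-sN_\lambda}$ with $s>0$ gives
\[
\bP(N_\lambda\le\lambda-1-\lambda^a) \le \exp\bigl[s(\lambda-1-\lambda^a)+\lambda(e^{-s}-1)\bigr].
\]
I would then choose $s:=\lambda^{a-1}$, which is small for large $\lambda$ since $a<1$. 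Using $e^{\pm s}-1=\pm s+s^2/2+O(s^3)$, a direct expansion shows that both exponents reduce to
\[
-\tfrac{1}{2}\lambda^{2a-1}\pm\lambda^{a-1}+O(\lambda^{3a-2}).
\]

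The assumptions $a>1/2$ and $a<1$ imply $2a-1>0$, $2a-1>a-1$, and $2a-1>3a-2$, so the dominant term in each exponent is $-\tfrac12\lambda^{2a-1}$; for $\lambda$ large enough each exponent is bounded above by $-\tfrac14\lambda^{2a-1}$. Adding the two tail bounds yields
\[
\bP(\cdots)\le 2\exp\bigl(-\tfrac14\lambda^{2a-1}\bigr)
\]
for all sufficiently large $\lambda$. Since $b<a$ implies $\lambda^{2a-1}\ge\lambda^{2b-1}$ for $\lambda\ge1$, this in turn gives $2\exp(-\tfrac14\lambda^{2b-1})$, and the remaining bounded range of $\lambda$ is absorbed by enlarging $c_{12}$ (the probability is trivially at most one).

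I do not expect any substantive obstacle here, since the argument is a textbook Poisson tail bound. The only minor care needed is the choice of scale $s=\lambda^{a-1}$ and the verification that the Taylor remainder does not disturb the leading term, both of which are controlled by the assumption $a<1$. The apparent slack between the sharper Chernoff rate $\lambda^{2a-1}$ and the weaker rate $\lambda^{2b-1}$ stated in the lemma is presumably built in so that polynomial-in-$\lambda$ prefactors arising when the lemma is invoked can be absorbed into the exponential.
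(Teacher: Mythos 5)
Your argument is correct and is essentially the route the paper takes: both rest on the Chernoff bound for the Poisson distribution, the paper quoting the optimized form $\bP(N_\lambda\ge x)\le\exp[x-\lambda-x\log(x/\lambda)]$ and expanding the logarithm via $\log(1+z)\ge z-\tfrac12 z^2$, while you derive the same estimate from the moment generating function with the explicit near-optimal choice $s=\lambda^{a-1}$. The only cosmetic difference is that the paper uses $b<a$ at the outset to replace the threshold $\lambda-1+\lambda^a$ by $\lambda+\lambda^b$ and so lands on the exponent $\lambda^{2b-1}$ directly, whereas you obtain the sharper $\lambda^{2a-1}$ and weaken to $\lambda^{2b-1}$ at the end.
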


\begin{proof}
If $x>\lambda$, then
$$ \bP(N_\lambda\ge x) \le \exp\left[ x-\lambda-x\log\left(\frac{x}{\lambda}\right)\right],$$
and if $x<\lambda$, then
$$ \bP(N_\lambda\le x) \le \exp\left[ x-\lambda-x\log\left(\frac{x}{\lambda}\right)\right],$$
see \cite[Thm 5.4]{Mitzen}.

Let $\lambda$ be so large that $-1+\lambda^a\ge \lambda^b$ and $-1-\lambda^a\le -\lambda^b$. Then
\begin{equation}\label{MU1}
\bP\left(N_\lambda\ge \lambda-1+\lambda^a\right) \le \bP\left(N_\lambda\ge \lambda+\lambda^b\right)
\le \exp\left[\lambda^b-(\lambda+\lambda^b)\log(1+\lambda^{b-1})\right].
\end{equation}
Since $\log(1+z)\ge z-\frac{1}{2}z^2$ for $z\in[0,1]$, expression (\ref{MU1}) is $\le \exp[-c_{14} \lambda^{2b-1}$] with a constant $c_{14}>0$, for sufficiently large $\lambda$. Further,
\begin{equation}\label{MU2}
\bP\left(N_\lambda\le \lambda-1-\lambda^a\right) \le \bP\left(N_\lambda\le \lambda-\lambda^b\right)
\le \exp\left[-\lambda^b-(\lambda-\lambda^b)\log(1-\lambda^{b-1})\right].
\end{equation}
Since $\log(1-z)\ge -z-\frac{4}{5}z^2$ for $z\in[0,1/2]$, expression (\ref{MU2}) is $\le \exp[-c_{15} \lambda^{2b-1}$] with a constant $c_{15}>0$, for sufficiently large $\lambda$. By adapting the constants, we obtain the assertion of the lemma for all $\lambda>0$.
\end{proof}

The following auxiliary result complements Lemma \ref{L3.3}.

\begin{lemma}\label{lambdaasymptotics}
Let $\bar{f}(\lambda):=\lambda^{-\beta}$ for $\lambda>0$, where $0<\beta<1$, and suppose that $\bar{f}(k)p_k\to p<\infty$ as $k\to\infty$.
Then
$$
\lim_{\lambda\to\infty}\bar{f}(\lambda)\sum_{k=0}^\infty\bP(N_\lambda=k)p_k=p.
$$
\end{lemma}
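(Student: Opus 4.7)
The plan is to imitate the proof of Lemma \ref{L3.3} by splitting the sum into a ``bulk'' part where $k$ is close to $\lambda$ and a ``tail'' part where it is far. The new twist, compared to Lemma \ref{L3.3}, is that $p_{k}$ is no longer a bounded sequence: the hypothesis forces $p_{k}\sim p\,k^{\beta}$, which grows. Consequently the polynomial Chebyshev-type tail estimate used in Lemma \ref{L3.3} is too weak, and the sharper exponential concentration of Lemma \ref{concentration} must be used instead.

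Concretely, I would fix $a,b$ with $1/2<b<a<1$, set $h(\lambda):=\lambda^{a}$, and decompose
\begin{eqnarray*}
A(\lambda) &:=& \bar f(\lambda)\sum_{|k-\lambda|>h(\lambda)}\bP(N_\lambda=k)\,p_k,\\
B(\lambda) &:=& \bar f(\lambda)\sum_{|k-\lambda|\le h(\lambda)}\bP(N_\lambda=k)\,p_k,
\end{eqnarray*}
so their sum equals $\bar f(\lambda)\sum_{k\ge 0}\bP(N_\lambda=k)p_k$. For the bulk, the key observation is that $|k-\lambda|\le\lambda^{a}$ with $a<1$ forces $k\to\infty$ and $k/\lambda\to 1$ uniformly; writing $\bar f(\lambda)p_k=(k/\lambda)^{\beta}\cdot\bar f(k)p_k$, one has $(k/\lambda)^{\beta}\to 1$ by an analogue of (\ref{3.6}) and $\bar f(k)p_k\to p$ by hypothesis. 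Since $\bP(|N_\lambda-\lambda|\le h(\lambda))\to 1$ by Lemma \ref{concentration}, the standard $\varepsilon$-argument used for Lemma \ref{L3.3} gives $B(\lambda)\to p$.

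For the tail, the hypothesis $\bar f(k)p_k\to p$ provides a constant $c_{16}$ with $p_k\le c_{16}(1+k)^{\beta}$ for all $k$. Applying Cauchy--Schwarz,
$$
|A(\lambda)|\le \bar f(\lambda)\,\bP\bigl(|N_\lambda-\lambda|>h(\lambda)\bigr)^{1/2}\bigl(\bE[p_{N_\lambda}^{2}]\bigr)^{1/2}.
$$
Since $2\beta<2$ one has $p_k^{2}\le c_{16}^{2}\bigl(1+(1+k)^{2}\bigr)$, and elementary Poisson moment bounds give $\bE[(1+N_\lambda)^{2}]\le C\lambda^{2}$ for large $\lambda$; hence $\bE[p_{N_\lambda}^{2}]^{1/2}\le C'\lambda$. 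Combining with Lemma \ref{concentration} yields
$$
|A(\lambda)|\le C''\,\lambda^{1-\beta}\exp\!\bigl(-\tfrac{1}{2}c_{13}\lambda^{2b-1}\bigr)\longrightarrow 0,
$$
since $2b-1>0$, so $A(\lambda)\to 0$. Together with $B(\lambda)\to p$ this proves the lemma.

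The main obstacle is precisely the unboundedness of the sequence $(p_k)$: any argument that simply pulls $|\bar f(\lambda)p_k-p|$ out with a supremum in $k$ will fail. The resolution is to exploit that the at-most-polynomial growth of $p_{k}$ is dominated, after Cauchy--Schwarz, by the exponential Poisson tail provided by Lemma \ref{concentration}; the choice $1/2<b<a<1$ is exactly what is needed for both $(k/\lambda)^{\beta}\to 1$ in the bulk and $\exp(-c\lambda^{2b-1})$ to beat the polynomial factor $\lambda^{1-\beta}$ in the tail.
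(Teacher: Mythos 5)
Your proof is correct, and it follows the paper's own proof almost verbatim in its overall structure: the same split at $|k-\lambda|\le h(\lambda)=\lambda^a$ with $\tfrac12<b<a<1$, the same treatment of the bulk term via $\bar f(\lambda)p_k=(k/\lambda)^\beta\,\bar f(k)p_k$, and the same reliance on Lemma \ref{concentration} for the tail. The one genuine difference is how the tail term is killed. The paper bounds $|\bar f(\lambda)p_k-p|\le c_{16}k$ for $\lambda\ge 1$ and then uses the identity $k\,\bP(N_\lambda=k)=\lambda\,\bP(N_\lambda=k-1)$ to rewrite the weighted tail sum as $c_{16}\lambda$ times a \emph{shifted} tail probability $\bP(N_\lambda>\lambda-1+h(\lambda)\ \text{or}\ N_\lambda<\lambda-1-h(\lambda))$ --- this is precisely why Lemma \ref{concentration} is stated with the otherwise odd-looking shift $\lambda-1\pm\lambda^a$. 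You instead use Cauchy--Schwarz together with the crude second-moment bound $\bE[p_{N_\lambda}^2]\le C\lambda^2$; this is equally valid (the factor $\lambda^{1-\beta}$ is still beaten by $\exp(-\tfrac12 c_{13}\lambda^{2b-1})$) and arguably more robust, since it would tolerate any polynomial growth of $p_k$ rather than just the linear bound. One small wrinkle: Lemma \ref{concentration} as stated controls the event $\{N_\lambda\ge\lambda-1+\lambda^a\}\cup\{N_\lambda\le\lambda-1-\lambda^a\}$, and your unshifted lower-tail event $\{N_\lambda<\lambda-\lambda^a\}$ is \emph{not} contained in $\{N_\lambda\le\lambda-1-\lambda^a\}$; this is repaired in one line either by invoking the lemma with an exponent $a'\in(b,a)$ in place of $a$ (so that $\lambda-\lambda^a\le\lambda-1-\lambda^{a'}$ for large $\lambda$) or by citing the intermediate estimate $\bP(N_\lambda\le\lambda-\lambda^b)\le\exp(-c_{15}\lambda^{2b-1})$ from its proof.
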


\begin{proof} We proceed similarly as in the proof of Lemma \ref{L3.3}. We choose $\frac{1}{2}<b<a<1$ and set $h(\lambda):=\lambda^a$. Then we define
\begin{eqnarray*}
\bar{A}(\lambda) &:=& \sum_{|k-\lambda|>h(\lambda)} \bP(N_\lambda=k)\left[\bar{f}(\lambda) p_k-p\right],\\
\bar{B}(\lambda) &:=& \sum_{|k-\lambda|\le h(\lambda)} \bP(N_\lambda=k)\left[\bar{f}(\lambda) p_k-p\right]
\end{eqnarray*}
($k\in\N_0$ in each case), so that
$$ \bar{A}(\lambda)+\bar{B}(\lambda)=\left(\sum_{k=0}^\infty \bP(N_\lambda=k)\bar{f}(\lambda) p_k\right)-p.$$
It can be shown exactly as in the proof of Lemma \ref{L3.3} that $\bar{B}(\lambda)\to 0$ as $\lambda\to\infty$. In order to see that also $\bar{A}(\lambda)\to 0$ as $\lambda\to\infty$, we first observe that for $\lambda\ge 1$ we have
$$ |\bar{f}(\lambda)p_k-p|\le c_{16} \bar{f}(k)^{-1}\le c_{16}k,$$
with a constant $c_{16}>0$ independent of $k\in\N_0$ and $\lambda\ge 1$. Hence, if $\lambda\ge 1$, then
\begin{align*}
|\bar{A}(\lambda)|&\le c_{16}\sum_{|k-\lambda|>h(\lambda)} k\, \bP(N_\lambda=k)\\
&=c_{16}\lambda \sum_{|k-\lambda|>h(\lambda),\,k\ge 1} e^{-\lambda}\frac{\lambda^{k-1}}{(k-1)!}\\
&=c_{16}\lambda \bP\left(N_\lambda> \lambda-1+h(\lambda)\text{ or } N_\lambda< \lambda-1-h(\lambda)\right)\\
&\le c_{16}\lambda c_{12}\exp(-c_{13}\lambda^{2b-1}),
\end{align*}
which tends to zero as $\lambda\to\infty$.
\end{proof}

After these preparations, we can show the following.

\begin{theorem}\label{T4.2}
Suppose that the directional distribution $\varphi$ of $\widehat X$ has a positive, continuous density $q$. Then
\begin{equation}\label{4.2}
\lim_{n\to\infty} n^{-\frac{d-1}{d+1}}\bE f_{d-1}(Z_K^{(n)}) = 2^{\frac{d-1}{d+1}}G(K,q).
\end{equation}
\end{theorem}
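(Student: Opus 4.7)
The plan is to follow essentially the same strategy as the proof of Theorem \ref{T3.1}: reduce to the event $\{Z_K^{(n)}\subset K_1\}$, use that on this event the $K$-cell depends only on the Poisson process restricted to $\Ha_{K_1}^*$, condition on the number of such hyperplanes, and then invoke the freshly proved Lemma \ref{lambdaasymptotics} to transfer the $k$-asymptotics $k^{-(d-1)/(d+1)}p_k\to G(K,q)$ from \cite{BFH10} to a $\lambda=2n$ Poisson asymptotics.

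First I would split
$$\bE f_{d-1}(Z_K^{(n)})=\bE[f_{d-1}(Z_K^{(n)})\mathbbm{1}\{Z_K^{(n)}\subset K_1\}]+\bE[f_{d-1}(Z_K^{(n)})\mathbbm{1}\{Z_K^{(n)}\not\subset K_1\}],$$
and control the second term by the Cauchy--Schwarz inequality, using Theorem \ref{T4.1} (with $k=2$) to bound $\bE[f_{d-1}(Z_K^{(n)})^2]\le c(2)n^{2(d-1)/(d+1)}$ and Lemma \ref{L3.1} to bound $\bP(Z_K^{(n)}\not\subset K_1)\le c_8 e^{-a_4 n}$. The product is $O(n^{(d-1)/(d+1)}e^{-a_4 n/2})$, which vanishes after division by $n^{(d-1)/(d+1)}$.

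Next, writing $X_K^{(n)}=\widehat X\fed\Ha_{K_1}^*$ and $Z':=\bigcap_{H\in X_K^{(n)}}H^-$, I would observe that $Z_K^{(n)}\subset Z'$ always, and that any hyperplane in $\widehat X\setminus\Ha_K$ outside $\Ha_{K_1}^*$ fails to hit $K_1$, so its associated halfspace contains $K_1\supseteq Z'\cap K_1$. Consequently $\{Z_K^{(n)}\subset K_1\}=\{Z'\subset K_1\}$ and on this event $Z_K^{(n)}=Z'$, hence
$$\bE[f_{d-1}(Z_K^{(n)})\mathbbm{1}\{Z_K^{(n)}\subset K_1\}]=\bE[f_{d-1}(Z')\mathbbm{1}\{Z'\subset K_1\}].$$
Because $X_K^{(n)}(\Ha_{K_1}^*)=:N_{2n}$ is Poisson with parameter $2n$ and, given $N_{2n}=k$, the $k$ hyperplanes are i.i.d.\ with distribution $\mu_K$, the right-hand side equals $\sum_{k=0}^\infty\bP(N_{2n}=k)\,p_k$ by the definition of $p_k$.

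Finally, I would invoke Lemma \ref{lambdaasymptotics} with $\bar f(\lambda)=\lambda^{-(d-1)/(d+1)}$ (so $\beta=(d-1)/(d+1)\in(0,1)$) and the hypothesis $\bar f(k)p_k\to G(K,q)$ from \cite[Thm.~5.3]{BFH10}, applied at $\lambda=2n$, to conclude
$$\lim_{n\to\infty}(2n)^{-\frac{d-1}{d+1}}\sum_{k=0}^\infty\bP(N_{2n}=k)p_k=G(K,q),$$
which multiplied by $2^{(d-1)/(d+1)}$ yields \eqref{4.2}. The main potential obstacle is the measure-theoretic argument identifying $\{Z_K^{(n)}\subset K_1\}$ with $\{Z'\subset K_1\}$ and ensuring the facet counts agree on this event; this is resolved by the geometric observation above that hyperplanes outside $\Ha_{K_1}^*$ cannot shrink $Z'$ past $K_1$, so no ambiguity between $f_{d-1}(Z_K^{(n)})$ and $f_{d-1}(Z')$ arises on the relevant event.
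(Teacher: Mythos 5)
Your argument is correct and follows essentially the same route as the paper's proof: split off the event $\{Z_K^{(n)}\not\subset K_1\}$ via Cauchy--Schwarz with Theorem \ref{T4.1} and Lemma \ref{L3.1}, condition on the Poisson number of hyperplanes in $\Ha_{K_1}^*$ to obtain $\sum_k\bP(N_{2n}=k)p_k$, and apply Lemma \ref{lambdaasymptotics} at $\lambda=2n$ with the limit $k^{-(d-1)/(d+1)}p_k\to G(K,q)$ from \cite[Thm.~5.3]{BFH10}. Your explicit verification that $Z_K^{(n)}$ coincides with $Z'=\bigcap_{H\in X_K^{(n)}}H^-$ on the event $\{Z'\subset K_1\}$ (so that the conditional expectations reduce to $p_k$) is a point the paper leaves implicit, and is a welcome addition.
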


\begin{proof}
First we note that the Cauchy--Schwarz inequality yields
$$ \bE\left[f_{d-1}(Z_K^{(n)}){\mathbbm 1}\{Z_K^{(n)}\not\subset K_1\}\right]
\le \left(\bE\left[f_{d-1}(Z_K^{(n)})^2\right]\right)^{\frac{1}{2}}\bP\left(Z_K^{(n)}\not\subset K_1\right)^{\frac{1}{2}},$$
which tends to zero as $n\to\infty$, by Theorem \ref{T4.1} and Lemma \ref{L3.1}. Hence, with $\bar{f}(\lambda):=\lambda^{-\frac{d-1}{d+1}}$,
\begin{align*}
&\lim_{n\to\infty}\left(\bar{f}(n)\bE f_{d-1}(Z_K^{(n)})\right)\\
&=\lim_{n\to\infty}\left(\bar{f}(n)\bE\left[ f_{d-1}(Z_K^{(n)}){\mathbbm 1}\{Z_K^{(n)}\subset K_1\}\right]\right)\\
&=\lim_{n\to\infty}\left(\bar{f}(n)\sum_{k=0}^\infty\bP(X^{(n)}_K(K_1)=k)\bE\left[ f_{d-1}
(Z_K^{(n)}){\mathbbm 1}\{Z_K^{(n)}\subset K_1\}\mid X^{(n)}_K(K_1)=k \right]\right)\\
&=\lim_{n\to\infty}\left(\bar{f}(n)\sum_{k=0}^\infty\bP(N_{2n}=k)p_k\right)\\
&=2^{\frac{d-1}{d+1}}\lim_{n\to\infty}\left(\bar{f}(2n)\sum_{k=0}^\infty\bP(N_{2n}=k)p_k\right)\\
&=2^{\frac{d-1}{d+1}}p=2^{\frac{d-1}{d+1}}G(K,q),
\end{align*}
where Lemma \ref{lambdaasymptotics} was used in the last step.
\end{proof}

In the isotropic case, we obtain from Theorem \ref{T3.2} and equality (\ref{4.1}) the following asymptotic relation.

\begin{theorem}\label{T4.3}
Suppose that $\widehat X$ is isotropic.  If $K$ is a simplicial polytope with $r$ facets, then
\begin{equation}\label{4.3}
\lim_{n\to\infty} (\log^{1-d}n)\, \bE f_{d-1}(Z_K^{(n)}) = rd\left(\frac{\log 2}{d+1}\right)^{d-1}.
\end{equation}
\end{theorem}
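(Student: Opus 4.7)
The proof is essentially a one-line combination of the two stated results, once we notice the relation between the hitting functional and the mean width in the isotropic case. The plan is to apply the dual Efron-type identity (\ref{4.1}) to reduce the expected facet number to the expected hitting-functional difference, then convert the hitting functional to the mean width, and finally invoke Theorem \ref{T3.2}.

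More concretely, since $\widehat X$ is isotropic, its directional distribution is $\varphi=\sigma$, so by the identity noted in Section \ref{sec1} we have $2\Phi(M)=W(M)$ for every $M\in\K$. In particular,
$$\Phi(Z_K^{(n)})-\Phi(K)=\tfrac{1}{2}\left[W(Z_K^{(n)})-W(K)\right].$$
Plugging this into (\ref{4.1}) gives
$$\bE f_{d-1}(Z_K^{(n)})=2n\,\bE\!\left[\Phi(Z_K^{(n)})-\Phi(K)\right]=n\left[\bE W(Z_K^{(n)})-W(K)\right].$$

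Now divide by $\log^{d-1}n$ and let $n\to\infty$. By Theorem \ref{T3.2}, which applies since $K$ is a simplicial polytope with $r$ facets and $\widehat X$ is isotropic, the limit of the right-hand side equals $rd\bigl(\tfrac{\log 2}{d+1}\bigr)^{d-1}$, yielding exactly (\ref{4.3}).

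There is essentially no obstacle here: the only thing to check is that the identity $2\Phi=W$ in the isotropic case is applicable pointwise to both $Z_K^{(n)}$ and $K$ (which it is, by the definitions of $\Phi$ and $W$ as integrals of support functions), and that the expectation in (\ref{4.1}) is finite so that the algebraic manipulation is legitimate (which follows from Theorem \ref{T4.1} for $k=1$, or equivalently from Theorem \ref{T2.2}). Thus the only substantive ingredients are the Efron-type dualization (\ref{4.1}) and the already proved asymptotic relation of Theorem \ref{T3.2}.
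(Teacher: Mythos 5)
Your proposal is correct and coincides with the paper's own argument: the authors likewise obtain Theorem \ref{T4.3} by combining the Efron-type identity (\ref{4.1}) with Theorem \ref{T3.2}, using $2\Phi=W$ in the isotropic case. The additional checks you mention (finiteness of the expectation, applicability of $2\Phi=W$) are harmless and do not change the route.
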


\noindent Authors's addresses:\\[2mm]
Daniel Hug\\Karlsruhe Institute of Technology, Department of Mathematics\\D-76128 Karlsruhe, Germany\\E-mail: daniel.hug@kit.edu\\[2mm]
Rolf Schneider\\Mathematisches Institut, Albert-Ludwigs-Universit{\"a}t\\D-79104 Freiburg i. Br., Germany\\E-mail: rolf.schneider@math.uni-freiburg.de


\begin{thebibliography}{10}
\bibitem{Bar07}  B\'ar\'any, I., Random polytopes, convex bodies, and approximation. In: A. Baddeley, I. B\'{a}r\'{a}ny, R. Schneider, W. Weil, {\em Stochastic Geometry,} pp. 77--118, Lecture Notes in Math. {\bf 1892}, Springer, Berlin, 2007.
\bibitem{Bar08} B\'ar\'any, I., Random points and lattice points in convex bodies. {\em Bull. Amer. Math. Soc.} {\bf 45} (2008), 339--356.
\bibitem{BL88} B\'{a}r\'{a}ny I., Larman, D.G., Convex bodies, economic cap coverings, random polytopes.  {\em Mathematika} {\bf 35} (1988), 274--291.
\bibitem{BFH10} B\"or\"oczky, K.J., Fodor, F., Hug, D., The mean width of random polytopes circumscribed around a convex body. {\em J. London Math. Soc.} {\bf 81} (2010), 499--523.
\bibitem{BS10} B\"or\"oczky, K.J., Schneider, R., The mean width of circumscribed random polytopes. {\em Canad. Math. Bull.} {\bf 53} (2010), 614--628.
\bibitem{Bro76} Bronshtein, E.M., $\epsilon$-entropy of convex sets and functions. {\em Siberian Math. J.} {\bf 17} (1976), 393--398.
\bibitem{Bru17} Brunel, V.--E., Uniform deviation and moment inequalities for random polytopes with general densities in arbitrary convex bodies. arXiv:1704.01620.
\bibitem{Buc85} Buchta, C., Zuf\"allige Polyeder -- Eine \"Ubersicht. In: E. Hlawka (ed), {\em Zahlentheoretische Analysis,} pp. 1--13, Lecture Notes in Math. {\bf 1114}. Springer, Berlin, 1985.
\bibitem{CS06} Calka, P., Schreiber, T., Large deviation probabilities for the number of vertices of random polytopes in the ball. {\em Adv. Appl. Prob. (SGSA)} {\bf 38} (2006), 47--58.
\bibitem{Hug13} Hug, D., Random polytopes. In: E. Spodarev (ed), {\em Stochastic Geometry, Spatial Statistics and Random Fields---Asymptotic Methods,} pp. 205--238, Lecture Notes in Math. {\bf 2068}, Springer, Berlin, 2013.
\bibitem{HS14} Hug, D., Schneider, R., Approximation properties of random polytopes associated with Poisson hyperplane processes. {\em Adv. Appl. Prob.} {\bf 46} (2014), 919--936.
\bibitem{Kal90} Kaltenbach, F.J., Asymptotisches Verhalten zuf\"alliger konvexer Polyeder. Doctoral Thesis, Albert-Ludwigs-Universit\"at, Freiburg i. Br, 1990.
\bibitem{KST95} Korostelev, A.P., Simar, L., Tsybakov, A.B., On estimation of monotone and convex boundaries. {\em Publ. Inst. Stat. Univ. Paris} {\bf 39} (1995), 3--18.
\bibitem{Mitzen} Mitzenmacher, M., Upfal, E., {\em Probability and Computing.
Randomized Algorithms and Probabilistic Analysis.} Cambridge University Press, Cambridge, 2005.
\bibitem{Rei05} Reitzner, M., Central limit theorems for random polytopes. {\em Probab. Theory Relat. Fields} {\bf 133} (2005), 483--507.
\bibitem{Rei10} Reitzner, M., Random polytopes. In: W.S. Kendall, I. Molchanov (eds), {\em New Perspectives in Stochastic Geometry,} pp. 45--76, Oxford University Press, Oxford, 2010.
\bibitem{Sch87} Schneider, R., Approximation of convex bodies by random polytopes. {\em Aequationes Math.} {\bf 32} (1987), 304--310.
\bibitem{Sch88} Schneider, R., Random approximation of convex sets. {\em J. Microscopy} {\bf 151} (1988), 211--227.
\bibitem{Sch18} Schneider, R., Discrete aspects of stochastic geometry. In: J.E. Goodman, J. O'Rourke, C.D. T\'{o}th (eds), {\em Handbook of Discrete and Computational Geometry,} 3rd edn., pp. 299--329, CRC Press, Boca Raton, 2018.
\bibitem{Sch14} Schneider, R., {\em Convex Bodies: The Brunn--Minkowski Theory.} 2nd edn., Encyclopedia of Mathematics and Its Applications, vol. {\bf 151}, Cambridge University Press, Cambridge, 2014.
\bibitem{Sch19} Schneider, R., Interaction of Poisson hyperplane processes and convex bodies. {\em J. Appl. Prob.} (to appear), arXiv:1812.08443.
\bibitem{SW08} Schneider, R., Weil, W., {\em Stochastic and Integral Geometry.} Springer, Berlin, 2008.
\bibitem{WW93} Weil, W., Wieacker, J.A., Stochastic geometry. In: P.M. Gruber, J.M. Wills (eds), {\em Handbook of Convex Geometry,}  pp. 1391--1438, North-Holland, Amsterdam, 1993.

\end{thebibliography}
\end{document}